\documentclass[12pt, reqno]{amsart}
\usepackage{amsmath, amsthm, amscd, amsfonts, latexsym, amssymb, graphicx, color}
\usepackage[bookmarksnumbered, colorlinks, plainpages]{hyperref}

\makeatletter \oddsidemargin.9375in \evensidemargin \oddsidemargin
\marginparwidth1.9375in \makeatother

\newtheorem{thm}{Theorem}[section]
\newtheorem{lem}[thm]{Lemma}
\newtheorem{prop}[thm]{Proposition}
\newtheorem{cor}[thm]{Corollary}
\newtheorem{prob}[thm]{Problem}
\theoremstyle{definition}
\newtheorem{defn}[thm]{Definition}

\theoremstyle{remark}

\numberwithin{equation}{section}

\begin{document}

\setcounter{page}{1}


\title[2-D skew constacyclic codes]{2-D skew constacyclic codes over $R[x,y;\rho,\theta]$}
\author[Mostafanasab]{H. Mostafanasab}
\thanks{{\scriptsize
\hskip -0.4 true cm MSC(2010): Primary: 00A69; Secondary: 12E20; 94B05
\newline Keywords: Cyclic codes, Skew polynomial rings, 2-D skew constacyclic codes.\\
 Published in Journal of Algebra and Related Topics, Vol. 4, No 2, (2016), pp 49-63}}
\begin{abstract}
For a finite field $\mathbb{F}_q$, the bivariate skew polynomial ring $\mathbb{F}_q[x,y;\rho,\theta]$
has been used to study codes \cite{XH}. In this paper, we give some characterizations of the ring $R[x,y;\rho,\theta]$ where $R$ is a commutative ring. We investigate 2-D skew $(\lambda_1,\lambda_2)$-constacyclic codes in the ring $R[x,y;\rho,\theta]/\langle x^l-\lambda_1,y^s-\lambda_2\rangle_{\mathit{l}}.$ Also, the dual of 2-D skew $(\lambda_1,\lambda_2)$-constacyclic codes is investigated.
\end{abstract}

\maketitle
\section{Introduction}

Cyclic codes are amongst the most studied algebraic codes.
Their structure is well known over finite fields \cite{M}. Recently codes over rings have
generated a lot of interest after a breakthrough paper by Hammons et al. \cite{HK} showed
that some well known binary non-linear codes are actually images of some linear
codes over $\mathbb{Z}_4$ under the Gray map.
Constacyclic codes over finite fields form a remarkable class of linear codes, as they include the important family of cyclic codes. Constacyclic codes also have practical applications as they can be efficiently encoded using simple shift registers. They have rich algebraic structures for efficient error detection and correction, which explains their preferred role in engineering.
In general, due to their rich algebraic structure, constacyclic codes have been studied over various
finite chain rings (see \cite{AN,BU1,D,D2,D3,DL,QZZ,UB}).
Two-dimensional (2-D) cyclic codes are generalizations of usual cyclic codes which 
were introduced by Ikai et al. \cite{IKK} and Imai \cite{I}. G\"{u}neri and \"{O}zbudak 
studied the relations between quasi-cyclic codes and 2-D cyclic codes \cite{GO}. Decoding problem for 2-D cyclic codes was studied
by some authors \cite{SH,S4,S,S2}. Polynomial rings and their ideals are essential to the construction and understanding of cyclic
codes. For the first time in \cite{BGU} non-commutative skew polynomial rings have been used to
construct (a generalization of) cyclic codes.
Skew-cyclic codes were introduced by Boucher et al. \cite{BU,BU2}. They considered the skew-cyclic
codes as ideals or submodules over skew polynomial rings and studied dual skew-cyclic codes.
Skew constacyclic codes have been investigated by by Boucher et al. in \cite{BSU} and Jitman et al. in \cite{JLU}.
Xiuli and Hongyan \cite{XH} generalized the 2-D cyclic codes to 2-D skew-cyclic codes. They studied the
structures and properties of 2-D skew-cyclic codes. Also, they built relationships between 2-D
skew-cyclic codes and other known codes.

Throughout this paper, let $R$ be a commutative ring. For two given automorphisms $\rho$ and $\theta$ of $R$, we consider the set of formal bivariate 
polynomials $$R[x,y;\rho,\theta]=\Big\{\sum\limits_{j=0}^{t}\sum\limits_{i=0}^{k}a_{i,j}x^iy^j| a_{i,j}\in R ~\mbox{and}~ k,t\in\mathbb{N}_0\Big\}$$ which forms a ring under the usual addition of polynomials and where the multiplication 
is defined using the rule $$ax^iy^j\star bx^ry^s=a\rho^i\theta^j(b)x^{i+r}y^{j+s},$$
and extended to all elements of $R[x,y;\rho,\theta]$ by associativity and distributivity. The ring $R[x,y;\rho,\theta]$ is called a
{\it bivariate skew polynomial ring over} $R$ and an element in $R[x,y;\rho,\theta]$ is called a {\it bivariate skew polynomial}.
It is easy to see that $R[x,y;\rho,\theta]$ is a non-commutative ring unless $\rho$ and $\theta$ are indentity automorphisms on $R$.
For a bivariate skew polynomial $f(x,y)$ in $R[x,y;\rho,\theta]$, let $\langle f(x,y)\rangle_{\mathit{l}}$ denote the left ideal of $R[x,y;\rho,\theta]$
generated by $f(x,y)$. Note that $\langle f(x,y)\rangle_{\mathit{l}}$ does not need to be two-sided.

In section 2, we give some characterizations of the ring $R[x,y;\rho,\theta]$. In section 3, we introduce and investigate 2-D skew $(\lambda_1,\lambda_2)$-constacyclic codes in $R[x,y;\rho,\theta]/\langle x^l-\lambda_1,y^s-\lambda_2\rangle_{\mathit{l}}.$ Also, the dual of 2-D skew $(\lambda_1,\lambda_2)$-constacyclic codes 
is investigated.

Let $\mathbb{N}_0^2=\mathbb{N}_0\times\mathbb{N}_0$. Then $\mathbb{N}_0^2$ is a partial ordered set with $(i,j)\geq(k,l)$
if and only if $i\geq k$ and $j\geq l$. Moreover, $\mathbb{N}_0^2$ can be also totally ordered by a kind of lexicographic order ``$\Rightarrow$'',
where $(i,j)\Rightarrow(k,l)$ if and only if $j>l$ or both $j=l$ and $i\geq k$. Otherwise, $(i,j)\rightarrow(k,l)$ means $j>l$ or both $j=l$ and $i>k$.
Notice that $(i,j)\geq(k,l)$ implies that $(i,j)\Rightarrow(k,l)$, but the converse does not necessarily hold. 
A nonzero bivariate polynomial $f(x,y)\in R[x,y;\rho,\theta]$ is said
to have quasi-degree ${\rm deg}(f(x,y))=(k,t)$ if $f(x,y)$ has a nonzero term $a_{k,t}x^ky^t$ but does not have any nonzero term $a_{i,j}x^iy^j$
such that $(i,j)\rightarrow(k,t)$ holds. In this case, $a_{k,t}$ is called {\it the leading coefficient of} $f(x,y)$.
A bivariate skew polynomial is called {\it monic} provided its leading coefficient is $1$.
Let $\mathbb{F}_q$ be the Galois field with $q$ elements. For any polynomials $f(x,y)$ and $g(x,y)$ in $\mathbb{F}_q[x,y;\rho,\theta]$ we have
$${\rm deg}(f(x,y)\star g(x,y))={\rm deg}(f(x,y))+{\rm deg}(g(x,y)).$$
It is straightforward to see that $\mathbb{F}_q[x,y;\rho,\theta]$ has no nonzero zero-divisors.

\section{Basic properties of $R[x,y;\rho,\theta]$}
In this paper we denote by $R^{\rho,\theta}$ (resp. $R^{\rho\theta}$) the subring of $R$ that is fixed by $\rho,\theta$ (resp. $\rho\theta$).

Let $R$ be a commutative ring and $f(x)\in R[x]$. In \cite{Mc}, McCoy observed that if $0\neq g(x)\in R[x]$ be such that 
$f(x)g(x)=0$, then there exists a nonzero element $r$ of $R$ such that $f(x)r=0$.
Now, we state the bivariate skew version of the McCoy condition. 
\begin{thm}
Let $f(x,y)\in R^{\rho,\theta}[x,y;\rho,\theta]$. If $f(x,y)\star g(x,y)=0$ for some $0\neq g(x,y)\in R[x,y;\rho,\theta]$,
then there exists $0\neq r\in R$ such that $f(x,y)\star r=0.$
\end{thm}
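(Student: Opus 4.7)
The plan is to adapt the classical McCoy argument to the bivariate skew setting, with the hypothesis $f\in R^{\rho,\theta}[x,y;\rho,\theta]$ being exactly what is needed to make scalar multiplication by $f$'s coefficients commute nicely with $\star$. I pick a nonzero right annihilator $g(x,y)$ of $f(x,y)$ whose quasi-degree $(k_0,t_0)$ is minimal, write its leading coefficient as $b_{k_0,t_0}$, and aim to prove the stronger assertion that $a_{i,j}\,g(x,y)=0$ for every coefficient $a_{i,j}$ of $f$. The main tool is the identity $f\star(ag)=a\,(f\star g)$ valid whenever $a\in R^{\rho,\theta}$, which follows immediately from $\rho^i\theta^j(a)=a$ and the commutativity of $R$. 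Via this identity, whenever $a$ is a coefficient of $f$, the polynomial $ag$ is itself a right annihilator of $f$ whose support lies in that of $g$; the minimality of ${\rm deg}(g)$ then forces $ag=0$ as soon as its $(k_0,t_0)$-coefficient $a\,b_{k_0,t_0}$ vanishes. Hence the whole problem reduces to showing $a_{i,j}\,b_{k_0,t_0}=0$ for every $(i,j)$.

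I would prove this by downward induction on $(i,j)$ in the $\Rightarrow$ order. For the base case $(p,q)={\rm deg}(f)$, the coefficient of $x^{p+k_0}y^{q+t_0}$ in $f\star g$ has only one summand, namely $a_{p,q}\rho^p\theta^q(b_{k_0,t_0})$, and setting it to zero and applying $\rho^{-p}\theta^{-q}$ gives $a_{p,q}b_{k_0,t_0}=0$. For the inductive step at a general $(i^*,j^*)$, I inspect the coefficient of $x^{i^*+k_0}y^{j^*+t_0}$: a routine check in the $\Rightarrow$ order shows that every contributing pair $((i,j),(k,l))$ other than $((i^*,j^*),(k_0,t_0))$ satisfies $(i,j)\rightarrow(i^*,j^*)$, and for each such $(i,j)$ the inductive hypothesis $a_{i,j}g=0$ combined with $a_{i,j}\in R^{\rho,\theta}$ makes the contribution $a_{i,j}\rho^i\theta^j(b_{k,l})=\rho^i\theta^j(a_{i,j}b_{k,l})$ vanish. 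What remains is $a_{i^*,j^*}\rho^{i^*}\theta^{j^*}(b_{k_0,t_0})=0$, hence $a_{i^*,j^*}b_{k_0,t_0}=0$, as required.

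Once $a_{i,j}\,g=0$ is established for every coefficient of $f$, picking any nonzero coefficient $r=b_{k,l}$ of $g$ yields $a_{i,j}\,r=0$ for all $(i,j)$, and since the $a_{i,j}$ are fixed by $\rho$ and $\theta$ we obtain $f\star r=\sum_{i,j}\rho^i\theta^j(a_{i,j}r)\,x^iy^j=0$. The only subtle point, and the reason the fixed-coefficient hypothesis is indispensable, is the identity $f\star(ag)=a\,(f\star g)$: without $\rho^i\theta^j(a)=a$ the factor $\rho^i\theta^j(a)$ cannot be pulled out of the sum, the minimality reduction collapses, and the argument cannot even get off the ground.
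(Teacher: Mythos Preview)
Your proof is correct and follows essentially the same McCoy-type argument as the paper: take a right annihilator $g$ of minimal quasi-degree, use the hypothesis $a_{i,j}\in R^{\rho,\theta}$ to see that each $a_{i,j}g$ is again a right annihilator, and exploit minimality to force $a_{i,j}g=0$ for every coefficient of $f$. The only cosmetic difference is organizational---you run a direct downward induction on $(i,j)$ in the $\Rightarrow$ order, whereas the paper picks the $\Rightarrow$-largest $(k,t)$ with $f_{k,t}g\neq0$ and derives a contradiction---but these are logically the same step.
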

\begin{proof}
Suppose that $f(x,y)=\sum\limits_{(i,j)\in \Lambda} f_{i,j}x^{i}y^{j}$. We can assume that
$g(x,y)$ is of the minimal degree ${\rm deg}(g(x,y))=(u,v)\big(\rightarrow(0,0)\big)$ (with respect to ``$\Rightarrow$''), and $g(x,y)$ has the leading coefficient $g_{u,v}$. By the minimality of ${\rm deg}(g(x,y))$ we have $f(x,y)\star g_{u,v}\neq0$. If $f_{i,j}g(x,y)=0$ for every $(i,j)\in\Lambda$,
then $f_{i,j}g_{u,v}=0$ for every $(i,j)\in\Lambda$.
Now, since $f(x,y)\in R^{\rho,\theta}[x,y;\rho,\theta]$, then $f(x,y)\star g_{u,v}=0$ 
which is a contradiction. Therefore $f_{i,j}g(x,y)\neq 0$ for some $(i,j)\in\Lambda$ and so assume that $(k,t)$ is the largest pair with this property (with respect to ``$\Rightarrow$''). Hence
$0=f(x,y)\star g(x,y)=(\sum\limits_{j=0}^{t}\sum\limits_{i=0}^{k} f_{i,j}x^{i}y^{j})\star g(x,y)$ which implies that 
$\rho^i\theta^j(f_{k,t}g_{u,v})=f_{k,t}\rho^i\theta^j(g_{u,v})=0$. Then $f_{k,t}g_{u,v}=0$.
Thus ${\rm deg}(g(x,y))\rightarrow{\rm deg}(f_{k,t}g(x,y))$. On the other hand $f(x,y)\star(f_{k,t}g(x,y))=f(x,y)\star(g(x,y)\star f_{k,t})=0$
which contradics the minimality of ${\rm deg}(g(x,y))$.
\end{proof}
The center of a ring $S$, denoted by $\mathrm{Z}(S)$, is the subset of $S$ consisting of all those elements in $S$ that commute with every element in $S$.
\begin{prop}\label{P1}
Let $g(x,y)\in\mathrm{Z}(R[x,y;\rho,\theta])$. 
Then $\langle x^ky^t\star g(x,y)\rangle_l$ is a two-sided ideal in $R[x,y;\rho,\theta]$ for every $k,t\in\mathbb{N}_{0}$.
\end{prop}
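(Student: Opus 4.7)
The plan is to prove that the left ideal $I=\langle x^ky^t\star g(x,y)\rangle_l$ is in fact closed under right multiplication by every element of $R[x,y;\rho,\theta]$, which (together with the fact that $I$ is already a left ideal) is equivalent to being two-sided. So I would fix an arbitrary element $h(x,y)=\sum a_{i,j}x^iy^j\in R[x,y;\rho,\theta]$ and attempt to rewrite $\bigl(x^ky^t\star g(x,y)\bigr)\star h(x,y)$ as $p(x,y)\star\bigl(x^ky^t\star g(x,y)\bigr)$ for a suitable left multiplier $p(x,y)$.

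The key computation is a commutation rule for sliding a monomial $x^ky^t$ to the right past an arbitrary polynomial. Directly from the multiplication law
$$ax^iy^j\star bx^ry^s=a\rho^i\theta^j(b)x^{i+r}y^{j+s},$$
one has $x^ky^t\star(ax^iy^j)=\rho^k\theta^t(a)x^{k+i}y^{t+j}=\bigl(\rho^k\theta^t(a)x^iy^j\bigr)\star x^ky^t$. Extending this by linearity, I would introduce the polynomial $h^{(k,t)}(x,y):=\sum\rho^k\theta^t(a_{i,j})x^iy^j$ and obtain the identity
$$x^ky^t\star h(x,y)=h^{(k,t)}(x,y)\star x^ky^t,$$
which holds for every $h$. (Note $\rho^k\theta^t$ is a well-defined automorphism of $R$ because $\rho$ and $\theta$ are.)

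Combining the two ingredients finishes the argument: since $g(x,y)\in\mathrm{Z}(R[x,y;\rho,\theta])$, we may swap $g(x,y)$ past $h(x,y)$, and then apply the commutation rule, giving
$$\bigl(x^ky^t\star g(x,y)\bigr)\star h(x,y)=x^ky^t\star h(x,y)\star g(x,y)=h^{(k,t)}(x,y)\star\bigl(x^ky^t\star g(x,y)\bigr),$$
which visibly lies in $I$. Thus $I$ absorbs multiplication on the right, and is therefore two-sided.

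The only subtle point, and what I would highlight as the main obstacle, is correctly identifying the commutation rule $x^ky^t\star h=h^{(k,t)}\star x^ky^t$ and verifying that the coefficient twist $\rho^k\theta^t$ appears with the right direction; a careless computation could produce $\rho^{-k}\theta^{-t}$ instead. Everything else is a direct consequence of the centrality of $g(x,y)$ and the definition of the left ideal.
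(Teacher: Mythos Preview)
Your proposal is correct and follows essentially the same route as the paper's proof: use centrality of $g(x,y)$ to commute it past $h(x,y)$, then apply the commutation rule $x^ky^t\star h(x,y)=h^{(k,t)}(x,y)\star x^ky^t$ with the twist $\rho^k\theta^t$ to exhibit the right product as a left multiple of the generator. The paper's argument is the same chain of equalities, just written in one display without naming $h^{(k,t)}$ separately.
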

\begin{proof}
Let $g(x,y)\in \mathrm{Z}(R[x,y;\rho,\theta])$. Assume that $f(x,y)=\sum\limits_{j=0}^{n}\sum\limits_{i=0}^{m}a_{i,j}x^iy^j\\\in R[x,y;\rho,\theta]$. Then
\begin{eqnarray*}
\big(x^ky^t\star g(x,y)\big)\star f(x,y)&=&\big(x^ky^t\star g(x,y)\big)\star\sum\limits_{j=0}^{n}\sum\limits_{i=0}^{m}a_{i,j}x^iy^j\\
&=&\Big(x^ky^t\star\big(\sum\limits_{j=0}^{n}\sum\limits_{i=0}^{m}a_{i,j}x^iy^j\big)\Big)\star g(x,y)\\
&&(\mbox{since $g(x,y)$ is central})\\
&=&\big(\sum\limits_{j=0}^{n}\sum\limits_{i=0}^{m}\rho^k\theta^t(a_{i,j})x^{k+i}y^{t+j}\big)\star g(x,y)\\
&=&\Big(\big(\sum\limits_{j=0}^{n}\sum\limits_{i=0}^{m}\rho^k\theta^t(a_{i,j})x^{i}y^{j}\big)\star x^{k}y^{t}\Big)\star g(x,y)\\
&=&\big(\sum\limits_{j=0}^{n}\sum\limits_{i=0}^{m}\rho^k\theta^t(a_{i,j})x^{i}y^{j}\big)\star\big(x^{k}y^{t}\star g(x,y)\big),
\end{eqnarray*} 
which belongs to $\langle x^ky^tg(x,y)\rangle_{\mathit{l}}$. Hence, the result follows.
\end{proof}

\begin{prop}\label{P2}
Let $|\langle\rho\rangle|\mid l$ and $|\langle\theta\rangle|\mid s$. Then $R^{\rho,\theta}[x^l,y^s;\rho,\theta]\subseteq \mathrm{Z}(R[x,y;\rho,\theta])$.
\end{prop}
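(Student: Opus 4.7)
The plan is to reduce the claim to a monomial-versus-monomial commutation check and then use the two hypotheses in a symmetric way: the divisibility conditions kill the twist coming from powers of $x^l,y^s$ acting on $R$, while the fixed-subring condition kills the twist coming from arbitrary monomials acting on the coefficients.

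First, by the distributive law, it suffices to show that every monomial of the form $a\,x^{li}y^{sj}$ with $a\in R^{\rho,\theta}$ and $i,j\in\mathbb{N}_0$ commutes with every monomial $b\,x^{r}y^{t}$ with $b\in R$ and $r,t\in\mathbb{N}_0$. Then I would compute both products directly from the multiplication rule. On the one side,
\begin{equation*}
(a\,x^{li}y^{sj})\star(b\,x^{r}y^{t})=a\,\rho^{li}\theta^{sj}(b)\,x^{li+r}y^{sj+t}.
\end{equation*}
Since $|\langle\rho\rangle|\mid l$ gives $\rho^{l}=\mathrm{id}$ (hence $\rho^{li}=\mathrm{id}$) and similarly $\theta^{sj}=\mathrm{id}$, this simplifies to $a b\,x^{li+r}y^{sj+t}$. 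On the other side,
\begin{equation*}
(b\,x^{r}y^{t})\star(a\,x^{li}y^{sj})=b\,\rho^{r}\theta^{t}(a)\,x^{r+li}y^{t+sj},
\end{equation*}
and because $a\in R^{\rho,\theta}$ is fixed by both $\rho$ and $\theta$, we have $\rho^{r}\theta^{t}(a)=a$, so this equals $b a\,x^{r+li}y^{t+sj}$. Commutativity of $R$ finishes the monomial case, and linearity extends the conclusion to all of $R^{\rho,\theta}[x^{l},y^{s};\rho,\theta]$.

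There is no serious obstacle here; the only mildly delicate point is making sure the two hypotheses are applied to the correct side of each multiplication (the divisibility to eliminate twists acting \emph{on} $b$, the fixed-subring condition to eliminate twists acting \emph{on} $a$). Once those are matched up, the proof is a one-line calculation repeated in both orders.
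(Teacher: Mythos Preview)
Your proof is correct and follows essentially the same approach as the paper: both reduce to the monomial case and use the divisibility hypotheses to trivialize $\rho^{li}\theta^{sj}$ acting on $b$ and the fixed-subring hypothesis to trivialize $\rho^{r}\theta^{t}$ acting on $a$. The only cosmetic difference is that the paper arranges the computation as a single chain of equalities (passing through the inverse automorphisms $\rho^{-k},\theta^{-t}$) rather than computing each side separately, but the content is identical.
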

\begin{proof}
Let $a\in R^{\rho,\theta}$, $b\in R$ and $i,j,k,t\in\mathbb{N}_0$. 
Therefore 
\begin{eqnarray*} 
(ax^{li}y^{sj})\star(bx^ky^t)&=&a\rho^{li}\theta^{sj}(b)x^{li+k}y^{sj+t}\\
&=&bax^{k+li}y^{t+sj}\\
&=&(bx^ky^t)\star\big(\theta^{-t}\rho^{-k}(a)x^{li}y^{sj}\big)\\
&=&(bx^ky^t)\star(ax^{li}y^{sj})~~~(\mbox{since $a\in R^{\rho^{-1},\theta^{-1}}$}).
\end{eqnarray*} 
Consequently $R^{\rho,\theta}[x^l,y^s;\rho,\theta]\subseteq \mathrm{Z}(R[x,y;\rho,\theta])$.
\end{proof}


\begin{prop}\label{P3}
Let $\lambda$ be a unit in $R$. The following conditions hold:
\begin{enumerate}
\item  If $\langle x^l-\lambda\rangle_{\mathit{l}}$ is a two-sided ideal of $R[x,y;\rho,\theta]$, 
then $|\langle\rho\rangle|\mid l$ and $\rho(\lambda)=\lambda$.
\item If $\langle y^s-\lambda\rangle_{\mathit{l}}$ is a two-sided ideal of $R[x,y;\rho,\theta]$, then $|\langle\theta\rangle|\mid s$ and $\theta(\lambda)=\lambda$.
\end{enumerate}
\end{prop}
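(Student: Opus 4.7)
The plan is to exploit the two-sided condition $(x^l-\lambda)\star f\in\langle x^l-\lambda\rangle_l$ for every $f\in R[x,y;\rho,\theta]$ by testing it on a few judiciously chosen values of $f$ and subtracting an obvious left-ideal element of the form $g\star(x^l-\lambda)$. The crucial auxiliary step will be a degree-bound lemma: every nonzero element of $\langle x^l-\lambda\rangle_l$ has quasi-degree whose $x$-component is at least $l$. This follows from a leading-term computation, since for any $h$ with quasi-degree $(k,t)$ and leading coefficient $h_{k,t}\neq 0$, the product
\[
h\star(x^l-\lambda)=\sum h_{i,j}x^{i+l}y^j-\sum h_{i,j}\rho^i\theta^j(\lambda)x^iy^j
\]
has leading term $h_{k,t}x^{k+l}y^t$: the two summands live in disjoint strata (the first has $x$-exponent $\geq l$, the second $\leq k$), $\lambda$ is a unit, so no cancellation can occur.

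To prove part (1), I would first take $f=a\in R$ and compute, using commutativity of $R$,
\[
(x^l-\lambda)\star a-\rho^l(a)\star(x^l-\lambda)=\rho^l(a)\lambda-\lambda a=(\rho^l(a)-a)\lambda.
\]
The left-hand side lies in $\langle x^l-\lambda\rangle_l$ while the right-hand side is a constant in $R$; by the degree-bound it must vanish, and since $\lambda$ is a unit we obtain $\rho^l(a)=a$ for every $a\in R$, i.e., $|\langle\rho\rangle|\mid l$. Next I would take $f=x$ and use the rule $x\star\lambda=\rho(\lambda)x$ to obtain
\[
(x^l-\lambda)\star x-x\star(x^l-\lambda)=\rho(\lambda)x-\lambda x=(\rho(\lambda)-\lambda)x\in\langle x^l-\lambda\rangle_l.
\]
For $l\geq 2$ this element has $x$-degree $1<l$ and so is forced to zero by the degree-bound, giving $\rho(\lambda)=\lambda$. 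In the borderline case $l=1$ the leading-term analysis forces any witness $h$ with $h\star(x-\lambda)\in Rx$ to be a constant $h_0\in R$, and then $h_0x-h_0\lambda\in Rx$ compels $h_0\lambda=0$, hence $h_0=0$, so again $\rho(\lambda)=\lambda$.

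Part (2) is proved by the symmetric argument: one tests the two-sided condition for $\langle y^s-\lambda\rangle_l$ against $f=a\in R$ and $f=y$, using $y\star a=\theta(a)y$ and the analogous degree-bound in the $y$-variable. The main obstacle I anticipate is setting up the degree-bound cleanly in this skew bivariate setting: the two-term nature of $x^l-\lambda$ could in principle allow cancellation of the leading $x^{k+l}y^t$ term of $h\star(x^l-\lambda)$, but this is ruled out because the two contributing sums occupy disjoint ranges of $x$-exponents and $\lambda$ is a unit. Once the degree-bound is in hand, both parts follow from the two short direct computations above.
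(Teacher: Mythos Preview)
Your proposal is correct and follows essentially the same approach that the paper intends: the paper does not give its own argument but simply refers to \cite[Proposition~2.2]{JLU}, whose proof (in the univariate skew setting) proceeds exactly as you do---test the two-sided condition against a constant $a\in R$ to force $\rho^l=\mathrm{id}$, then against $x$ to force $\rho(\lambda)=\lambda$, in each case using a degree comparison to see that the resulting low-degree element of the ideal must vanish. Your write-up supplies the bivariate adaptation that the paper omits.

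One small point of phrasing: your ``disjoint strata'' justification (``the first has $x$-exponent $\geq l$, the second $\leq k$'') is literally true only within the top $y$-stratum $y^t$, since $h$ may have terms $h_{i,j}x^iy^j$ with $j<t$ and $i>k$ (quasi-degree constrains only the $y^t$-layer). But that restriction to the $y^t$-layer is exactly what is needed to see that the leading term $h_{k,t}x^{k+l}y^t$ survives, so the conclusion---that every nonzero element of $\langle x^l-\lambda\rangle_l$ has quasi-degree with $x$-component $\geq l$---is correct. You might tighten the sentence to say the disjointness holds in the $y^t$-layer.
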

\begin{proof}
The proof is similar to that of \cite[Proposition 2.2]{JLU}.
\end{proof}

\begin{prop}
Let $\lambda_1,\lambda_2$ be units in $R$. The following conditions hold:
\begin{enumerate}
\item  If $|\langle\theta\rangle|\mid s$ and $\langle x^l-\lambda_1,y^s-\lambda_2\rangle_{\mathit{l}}$ is a two-sided ideal of $R[x,y;\rho,\theta]$, 
then $|\langle\rho\rangle|\mid l$ and $\rho(\lambda_1)=\lambda_1$.
\item If $|\langle\rho\rangle|\mid l$ and $\langle x^l-\lambda_1,y^s-\lambda_2\rangle_{\mathit{l}}$ is a 
two-sided ideal of $R[x,y;\rho,\theta]$, then $|\langle\theta\rangle|\mid s$ and $\theta(\lambda_2)=\lambda_2$.
\end{enumerate}
\end{prop}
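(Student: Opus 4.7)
The plan is to mimic the proof of Proposition~\ref{P3} while handling the extra generator. I will treat only part~(1); part~(2) is analogous, obtained by interchanging the roles of $x,l,\rho,\lambda_1$ and $y,s,\theta,\lambda_2$ and using $|\langle\rho\rangle|\mid l$ in place of $|\langle\theta\rangle|\mid s$.

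The first step is to extract two commutator identities in $I$. Two-sidedness of $I$ together with commutativity of $R$ gives
\[
(x^l-\lambda_1)\star r - r\star(x^l-\lambda_1) = (\rho^l(r)-r)\,x^l \in I \quad (r\in R),
\]
and a direct computation yields
\[
x\star(x^l-\lambda_1) - (x^l-\lambda_1)\star x = (\lambda_1-\rho(\lambda_1))\,x \in I.
\]
Consequently, it suffices to prove the key claim: \emph{for any $\beta\in R$ and $k\in\{1,l\}$, $\beta x^k\in I$ implies $\beta=0$}. Taking $\beta=\rho^l(r)-r$ with $k=l$ yields $\rho^l=\mathrm{id}$, i.e.\ $|\langle\rho\rangle|\mid l$; taking $\beta=\lambda_1-\rho(\lambda_1)$ with $k=1$ yields $\rho(\lambda_1)=\lambda_1$.

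To prove the key claim, I write $\beta x^k = f\star(x^l-\lambda_1) + g\star(y^s-\lambda_2)$ and use the identification $R[x,y;\rho,\theta]\cong R[x;\rho][y;\theta']$, where $\theta'$ extends $\theta$ with $\theta'(x)=x$, to decompose $f=\sum_j f_j(x)y^j$ and $g=\sum_j g_j(x)y^j$ with $f_j,g_j\in R[x;\rho]$. The hypothesis $|\langle\theta\rangle|\mid s$ forces $\theta^s=\mathrm{id}$ and makes $y^s$ central (Proposition~\ref{P2}), so matching $y^j$-coefficients produces
\[
\beta x^k = f_0(x)(x^l-\lambda_1) - g_0(x)\star\lambda_2,\qquad g_j(x)\star\theta^j(\lambda_2) = f_j(x)\bigl(x^l-\theta^j(\lambda_1)\bigr) + g_{j-s}(x)\ (j\ge 1),
\]
with $g_i:=0$ for $i<0$. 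Noting that each $\theta^j(\lambda_2)$ is a unit in $R$ and that $x^l-c$ is a right non-zero-divisor in $R[x;\rho]$ for every unit $c\in R$, I set $M_f=\max\{j:f_j\ne 0\}$ and $M_g=\max\{j:g_j\ne 0\}$ (both $-\infty$ if empty). A top-down comparison of $\max(M_f,M_g+s)$ against the $y$-degree of the left-hand side rules out the cases $M_f>M_g+s$ and $M_g+s>M_f$ at once, while the remaining case $M_f=M_g+s$ is handled by a descending reduction step that strictly decreases $\max(M_f,M_g+s)$. Iterating collapses all higher-$y^j$ contributions, and the surviving $y^0$-equation reduces to $\beta x^k=F(x)(x^l-\lambda_1)$ in $R[x;\rho]$; the degree analysis used in the proof of Proposition~\ref{P3} then forces $\beta=0$.

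The main obstacle will be executing the descending reduction in the case $M_f=M_g+s$: after subtracting off the top terms $f_{M_f}(x)y^{M_f}$ and $g_{M_g}(x)y^{M_g}$, the equation picks up a new right-hand term $Q(x)\,y^{M_g}$ that must be absorbed coherently into the reductions $\tilde f,\tilde g$. Tracking these corrections through the recursion while keeping the finite supports of $\tilde f,\tilde g$ under control is the delicate point; the hypothesis $|\langle\theta\rangle|\mid s$, which decouples the recursion into residue classes modulo~$s$ and makes every $\theta^j(\lambda_2)$ a unit, is essential at each step.
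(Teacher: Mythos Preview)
Your reduction to the commutator identities is correct, and the paper (which defers almost everything to \cite[Proposition~2.2]{JLU}) is aiming at the same target. The trouble is with your ``key claim'' itself: as you have stated it, it is \emph{false}, and your proposed proof of it never invokes the two-sidedness hypothesis, so it cannot succeed.

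Concretely, regardless of whether $I=\langle x^l-\lambda_1,y^s-\lambda_2\rangle_l$ is two-sided, both $(y^s-\lambda_2)\star(x^l-\lambda_1)$ and $(x^l-\lambda_1)\star(y^s-\lambda_2)$ lie in the \emph{left} ideal $I$, and their difference is
\[
(y^s-\lambda_2)\star(x^l-\lambda_1)-(x^l-\lambda_1)\star(y^s-\lambda_2)=\bigl(\rho^l(\lambda_2)-\lambda_2\bigr)x^l\in I
\]
(using only $\theta^s=\mathrm{id}$). Thus $\beta x^l\in I$ with $\beta=\rho^l(\lambda_2)-\lambda_2$, which is nonzero precisely when $\rho^l\neq\mathrm{id}$---the very thing you are trying to rule out. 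This is exactly the obstruction that surfaces as your ``extra term $Q(x)y^{M_g}$'': tracing through one step of your reduction, the correction you need to reabsorb is a left multiple of $(\rho^l(\lambda_2)-\lambda_2)x^l$, and there is no way to write that as $A\star(x^l-\lambda_1)+B\star(y^s-\lambda_2)$ with strictly smaller $y$-support. So the descending induction you sketch is not merely ``delicate''; it is blocked, and the circularity is intrinsic---the key claim only becomes true \emph{after} one knows $\rho^l=\mathrm{id}$.

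A repair must feed the two-sidedness of $I$ into the degree analysis rather than trying to prove a purely left-ideal statement. One way is to work with the specific elements $(x^l-\lambda_1)\star r$ and $(x^l-\lambda_1)\star x$ (not an arbitrary $\beta x^k$) and exploit that each has both bounded $x$-degree \emph{and} a prescribed constant term; this extra constraint, together with $\theta^s=\mathrm{id}$, lets one match coefficients down to $f\in R$ and $g=0$, as in \cite{JLU}. The paper's one-line proof is pointing at exactly this refinement, though it leaves the bivariate bookkeeping to the reader.
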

\begin{proof}
Suppose that $|\langle\theta\rangle|\mid s$ and $\langle x^l-\lambda_1,y^s-\lambda_2\rangle_{\mathit{l}}$ is a two-sided ideal. Let $r\in R$. Then
$rx^l-r\lambda_1=r(x^l-\lambda_1)=(x^l-\lambda_1)r^{\prime}$ for some $r^{\prime}\in R$. The remainder is similar to the proof of \cite[Proposition 2.2]{JLU}.
\end{proof}

\begin{thm}\label{T1} 
Let $\lambda\in R$. The following conditions hold:
\begin{enumerate}
\item If $|\langle\rho\rangle|\mid l$ and $\lambda\in R^{\rho,\theta}$, 
then $x^l-\lambda$ is central in $R[x,y;\rho,\theta]$.
\item If $|\langle\theta\rangle|\mid s$ and $\lambda\in R^{\rho,\theta}$, 
then $y^s-\lambda$ is central in $R[x,y;\rho,\theta]$.
\item If $|\langle\rho\rangle|\mid l$, $|\langle\theta\rangle|\mid s$ and $\lambda_1,\lambda_2\in R^{\rho,\theta}$, 
then $\langle x^l-\lambda_1,y^s-\lambda_2\rangle_{\mathit{l}}$ is a two-sided ideal of $R[x,y;\rho,\theta]$.
\end{enumerate}
\end{thm}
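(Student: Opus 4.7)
The plan is to handle parts (1) and (2) by direct commutator calculations on monomial generators, and then derive part (3) from (1), (2), and Proposition~\ref{P1}.

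For part (1), I would check that $x^l-\lambda$ commutes with every monomial $bx^ky^t$, so that by bilinearity of $\star$ it lies in the center. The hypothesis $|\langle\rho\rangle|\mid l$ collapses $\rho^l$ to the identity, giving $x^l\star bx^ky^t = \rho^l(b)x^{l+k}y^t = bx^{l+k}y^t = bx^ky^t\star x^l$. The hypothesis $\lambda\in R^{\rho,\theta}$ forces $\rho^k\theta^t(\lambda)=\lambda$, which combined with commutativity of $R$ yields $bx^ky^t\star\lambda = b\lambda x^ky^t = \lambda\star bx^ky^t$. Subtracting the two identities gives $(x^l-\lambda)\star bx^ky^t = bx^ky^t\star(x^l-\lambda)$. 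Part (2) is entirely symmetric, with $\theta, y, s$ playing the roles of $\rho, x, l$; the hypothesis $\theta(\lambda)=\lambda$ (buried inside $\lambda\in R^{\rho,\theta}$) is used together with $\theta^s=\mathrm{id}$.

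For part (3), the observation is that a left ideal generated by central elements is automatically two-sided. By parts (1) and (2), both $x^l-\lambda_1$ and $y^s-\lambda_2$ lie in $\mathrm{Z}(R[x,y;\rho,\theta])$. Applying Proposition~\ref{P1} with $k=t=0$ shows that each principal left ideal $\langle x^l-\lambda_1\rangle_l$ and $\langle y^s-\lambda_2\rangle_l$ is two-sided; since $\langle x^l-\lambda_1,y^s-\lambda_2\rangle_l$ is their sum, it inherits the two-sided property.

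There is no real obstacle here: the machinery was already packaged into Proposition~\ref{P1}, and what remains is just the bookkeeping verification that $\lambda\in R^{\rho,\theta}$ together with the appropriate order-divisibility condition exactly cancels the twists produced by $\rho$ and $\theta$ when a monomial is slid past $x^l-\lambda$ or $y^s-\lambda$. A minor subtlety worth being explicit about is that commutation with pure scalars $r\in R$ should also be checked, but this is simply the special case $k=t=0$ of the monomial computation and requires no new idea.
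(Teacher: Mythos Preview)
Your proposal is correct and matches the paper's approach: the paper likewise verifies commutation of $x^l-\lambda$ with an arbitrary monomial $ax^ky^t$ using $\rho^l=\mathrm{id}$ and $\rho^k\theta^t(\lambda)=\lambda$, declares part~(2) symmetric, and deduces part~(3) directly from parts~(1) and~(2). Your explicit invocation of Proposition~\ref{P1} for part~(3) just spells out what the paper leaves implicit.
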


\begin{proof}
$(1)$ Suppose that $|\langle\rho\rangle|\mid l$, $\rho(\lambda)=\lambda$ and $\theta(\lambda)=\lambda$. Let $a\in R$ and $k,t\in\mathbb{N}_0$.
Thus 
\begin{eqnarray*}
(ax^ky^t)\star(x^l-\lambda)&=&ax^{k+l}y^t- ax^ky^t\lambda\\
&=&ax^{l+k}y^t-\rho^{k}\theta^{t}(\lambda)ax^ky^t\\
&=&\big(x^{l}\star\rho^{-l}(a)x^{k}y^t\big)-\lambda ax^ky^t\\
&=&\big(x^{l}\star ax^{k}y^t\big)-\lambda ax^ky^t\\
&=&(x^l-\lambda)\star(ax^ky^t).
\end{eqnarray*}
Consequently $x^l-\lambda\in \mathrm{Z}(R[x,y;\rho,\theta])$.\\
(2) The proof if similar to that of (1).\\
(3) Is a direct consequence of parts (1) and (2).
\end{proof}

\begin{cor}\label{C1} 
Let $R$ be a ring. The following conditions hold:
\begin{enumerate}
\item Let $\theta(\lambda)=\lambda$. Then $|\langle\rho\rangle|\mid l$ and $\rho(\lambda)=\lambda$
if and only if $x^l-\lambda$ is central in $R[x,y;\rho,\theta]$.
\item Let $\rho(\lambda)=\lambda$. Then $|\langle\theta\rangle|\mid s$ and $\theta(\lambda)=\lambda$ 
if and only if $y^s-\lambda$ is central in $R[x,y;\rho,\theta]$.
\end{enumerate}
\end{cor}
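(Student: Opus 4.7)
The corollary has two directions in each part, so the natural plan is to separate them. The forward implication in each part is already essentially in Theorem \ref{T1}: the hypothesis of Corollary \ref{C1}(1) combined with the conclusion $|\langle\rho\rangle|\mid l$, $\rho(\lambda)=\lambda$ gives $\lambda\in R^{\rho,\theta}$, so Theorem \ref{T1}(1) immediately yields that $x^l-\lambda$ is central. Part (2) is handled the same way using Theorem \ref{T1}(2). So the real content is the converse.

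For the converse in part (1), assume $x^l-\lambda\in\mathrm{Z}(R[x,y;\rho,\theta])$. The idea is to extract both desired conclusions by testing centrality against two carefully chosen probes: a constant $a\in R$ and the indeterminate $x$. Computing
\begin{equation*}
a\star(x^l-\lambda)=ax^l-a\lambda,\qquad (x^l-\lambda)\star a=\rho^l(a)x^l-\lambda a,
\end{equation*}
and using commutativity of $R$ to cancel $a\lambda=\lambda a$, I would compare coefficients of $x^l$ to conclude $\rho^l(a)=a$ for every $a\in R$, i.e.\ $|\langle\rho\rangle|\mid l$. Then testing with $x$,
\begin{equation*}
x\star(x^l-\lambda)=x^{l+1}-\rho(\lambda)x,\qquad (x^l-\lambda)\star x=x^{l+1}-\lambda x,
\end{equation*}
forces $\rho(\lambda)=\lambda$ by comparing coefficients of $x$. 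Part (2) is strictly analogous: probe with a constant $a$ to obtain $\theta^s=\mathrm{id}_R$, then probe with $y$ (using $y\star\lambda=\theta(\lambda)y$) to obtain $\theta(\lambda)=\lambda$.

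The main obstacle, such as it is, is making sure the argument does not secretly require $\lambda$ to be a unit (Proposition \ref{P3} does). One might be tempted to just quote Proposition \ref{P3}, but since the hypothesis of Corollary \ref{C1} does not assert $\lambda$ is a unit, I will instead carry out the short direct computation above, where no invertibility is needed; the cancellation works purely because matching the $x^l$ (respectively $x$) components of an equality of bivariate skew polynomials is a legitimate operation in $R[x,y;\rho,\theta]$. A second minor point to guard against is not implicitly using $\theta(\lambda)=\lambda$ to derive $\rho^l=\mathrm{id}_R$; the probe by constants shows this automatically, and the probe by $x$ (which introduces only $\rho$, not $\theta$) isolates $\rho(\lambda)=\lambda$ cleanly, so the standing hypothesis $\theta(\lambda)=\lambda$ is used only in the forward direction to feed Theorem \ref{T1}.
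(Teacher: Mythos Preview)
Your argument is correct. The forward direction matches the paper's: both invoke Theorem~\ref{T1}. For the converse, however, the paper simply cites Proposition~\ref{P3} (together with the trivial observation that a central element generates a two-sided left ideal), whereas you give a short direct computation by probing centrality against a generic constant $a$ and against $x$ (resp.\ $y$). Your route is slightly more self-contained and, as you observe, sidesteps a wrinkle in the paper's citation: Proposition~\ref{P3} is stated only for $\lambda$ a unit, while Corollary~\ref{C1} does not impose that hypothesis, so the paper's one-line proof has a small mismatch of assumptions that your direct argument avoids. The trade-off is that the paper's approach highlights the logical dependence on the earlier structural results, while yours makes the corollary essentially independent of Proposition~\ref{P3}.
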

\begin{proof}
By Proposition \ref{P3} and Theorem \ref{T1}.
\end{proof}

\begin{prop}\label{P4}
Let $f\star g$ be a monic central bivariat skew polynomial in $R[x,y;\rho,\theta]$ for some $f,g\in R[x,y;\rho,\theta]$.
Then $f\star g=g\star f$.
\end{prop}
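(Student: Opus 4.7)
The plan is to exploit the centrality of $h := f\star g$ together with the monicity hypothesis in order to show that $f$ is left-cancellable in $R[x,y;\rho,\theta]$, after which the desired identity follows immediately. Since $h$ commutes with every element of $R[x,y;\rho,\theta]$, in particular $h\star f = f\star h$; substituting $h=f\star g$ gives
$$f\star g\star f \;=\; f\star f\star g.$$
Once left-cancellability of $f$ is established, stripping the initial factor $f$ from both sides yields $g\star f = f\star g$.

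To obtain left-cancellability of $f$, I would first record the bivariate skew analogue of the ``degree of a product'' lemma: if $f$ has quasi-degree $(a,b)$ with leading coefficient $f_{a,b}$ and $g$ has quasi-degree $(c,d)$ with leading coefficient $g_{c,d}$, then the coefficient of $x^{a+c}y^{b+d}$ in $f\star g$ is exactly $f_{a,b}\rho^{a}\theta^{b}(g_{c,d})$, with no cancellation coming from other pairs of terms. The verification uses only the definition of the ordering ``$\Rightarrow$'': for any position $(i,j)$ appearing in $f$ and $(k,l)$ appearing in $g$, the choice of leading terms forces $j\le b$ and $l\le d$, so $(i+k,j+l)=(a+c,b+d)$ already requires $j=b$ and $l=d$; a parallel inequality for the $x$-degrees then forces $i=a$ and $k=c$.

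Because $h = f\star g$ is monic, the above computation yields $f_{a,b}\rho^{a}\theta^{b}(g_{c,d})=1$ in the commutative ring $R$, hence both $f_{a,b}$ and $\rho^{a}\theta^{b}(g_{c,d})$ are units; in particular $f_{a,b}\in R^{\times}$. Now suppose $f\star C=0$ for some nonzero $C\in R[x,y;\rho,\theta]$ with leading coefficient $c_{k,t}$ at quasi-degree $(k,t)$. The same leading-term identification shows that the coefficient of $x^{a+k}y^{b+t}$ in $f\star C$ is $f_{a,b}\rho^{a}\theta^{b}(c_{k,t})$; since $f_{a,b}$ is a unit and $\rho^{a}\theta^{b}$ is an automorphism of $R$, this coefficient is nonzero, contradicting $f\star C=0$. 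Hence $f$ is left-cancellable, and the identity $f\star g\star f=f\star f\star g$ collapses to $f\star g = g\star f$.

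The principal obstacle is justifying the non-cancellation claim in the leading-term computation over a general commutative ring $R$, since \emph{a priori} several pairs of monomials from $f$ and $g$ could contribute to the same bidegree and the ordering ``$\Rightarrow$'' only fixes a lexicographic priority on the $y$-exponent. Once this bivariate degree-additivity lemma is in hand, the combination of centrality of $h$ and invertibility of $f_{a,b}$ reduces the proposition to a one-line cancellation.
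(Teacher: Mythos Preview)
Your argument is correct and follows essentially the same strategy as the paper: use centrality of $f\star g$ to obtain a commutator identity, then exploit monicity to conclude that one of the factors has a unit leading coefficient and is therefore cancellable. The only cosmetic difference is that the paper commutes $f\star g$ with $g$ and cancels $g$ on the right (arguing $(g\star f - f\star g)\star g=0$), whereas you commute with $f$ and cancel $f$ on the left; your detailed verification of the leading-term lemma is exactly what the paper tacitly uses when it asserts that a polynomial with unit leading coefficient is not a zero-divisor.
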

\begin{proof}
Since $f\star g$ is central,
then we have $(g\star f)\star g=g\star (f\star g)=(f\star g)\star g.$ Therefore
$(g\star f-f\star g)\star g=0$. On the other hand $f\star g$ is monic, so
the leading coefficient of $g$ is unit. Hence $g$ is
not a zero-divisor. Consequently $f\star g=g\star f$ in $R[x,y;\rho,\theta]$.
\end{proof}

\begin{cor}
Let $|\langle\rho\rangle|\mid l$, $|\langle\theta\rangle|\mid s$ and $\lambda_1,\lambda_2\in R^{\rho,\theta}$.
If $(x^l-\lambda_1)\star(y^s-\lambda_2)=f\star g$ for some $f,g\in R[x,y;\rho,\theta]$, 
then $f\star g=g\star f$.
\end{cor}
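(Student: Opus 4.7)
The plan is to combine Theorem \ref{T1} with Proposition \ref{P4} directly, so the work reduces to verifying the two hypotheses of Proposition \ref{P4}: namely that $(x^l-\lambda_1)\star(y^s-\lambda_2)$ is central and monic.

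First I would invoke Theorem \ref{T1}(1) with $\lambda=\lambda_1$ to conclude that $x^l-\lambda_1\in\mathrm{Z}(R[x,y;\rho,\theta])$, and Theorem \ref{T1}(2) with $\lambda=\lambda_2$ to conclude that $y^s-\lambda_2\in\mathrm{Z}(R[x,y;\rho,\theta])$. Since the product of two central elements of any ring is central, it follows that $(x^l-\lambda_1)\star(y^s-\lambda_2)\in\mathrm{Z}(R[x,y;\rho,\theta])$.

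Next I would check monicity. Expanding gives
\[
(x^l-\lambda_1)\star(y^s-\lambda_2)=x^l y^s-\lambda_2 x^l-\lambda_1 y^s+\lambda_1\lambda_2,
\]
where the $\star$-products simplify because $\lambda_1,\lambda_2\in R^{\rho,\theta}$. With respect to the lexicographic order ``$\Rightarrow$'' defined in the introduction, the pair $(l,s)$ strictly dominates each of $(l,0)$, $(0,s)$, and $(0,0)$, so the quasi-degree of this polynomial is $(l,s)$ with leading coefficient $1$. Hence the product is monic.

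Finally, with the hypothesis $(x^l-\lambda_1)\star(y^s-\lambda_2)=f\star g$, we see that $f\star g$ is a monic central bivariate skew polynomial, and Proposition \ref{P4} immediately gives $f\star g=g\star f$. The only step that requires any care is the monicity verification, which amounts to checking that $(l,s)$ is the maximum under ``$\Rightarrow$'' among the four exponent pairs appearing in the expansion; there is no genuine obstacle here.
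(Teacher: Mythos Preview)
Your proof is correct and follows exactly the paper's approach: the paper's proof consists of the single line ``By Theorem \ref{T1} and Proposition \ref{P4},'' and you have simply spelled out the two verifications (centrality and monicity) that make that citation work.
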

\begin{proof}
By Theorem \ref{T1} and Proposition \ref{P4}.
\end{proof}


\section{2-D skew $(\lambda_1,\lambda_2)$-constacyclic codes}

\begin{defn}
Let $\mathcal{C}$ be a linear code over $R$ of length $ls$ whose codewords are viewed as $l\times s$ arrays,
i.e., $c\in \mathcal{C}$ is written as
$$c=\begin{pmatrix}
c_{0,0} & c_{0,1}&\dots & c_{0,s-1} \\
c_{1,0} & c_{1,1}&\dots & c_{1,s-1}\\
\vdots&\vdots&\ddots&\vdots\\
c_{l-1,0} &c_{l-1,1}&\dots & c_{l-1,s-1}
\end{pmatrix}.$$
We say that $\mathcal{C}$ is a {\it column skew $\lambda_1$-constacyclic code} 
if for every $l\times s$ array $c=\left(c_{i,j}\right)\in\mathcal{C}$
we have that
$$\begin{pmatrix}
\lambda_1\rho(c_{l-1,0})&\lambda_1\rho(c_{l-1,1})& \dots &\lambda_1\rho(c_{l-1,s-1})\\
\rho(c_{0,0})&\rho(c_{0,1})& \dots &\rho(c_{0,s-1}) \\
\vdots&\vdots&\ddots&\vdots\\
\rho(c_{l-2,0}) &\rho(c_{l-2,1}) & \dots &\rho(c_{l-2,s-1})
\end{pmatrix}\in \mathcal{C}.$$\\
Also, we say that $\mathcal{C}$ is a {\it row skew $\lambda_2$-constacyclic code} 
if for every $l\times s$ array $c=\left(c_{i,j}\right)\in\mathcal{C}$
we have that
$$\begin{pmatrix}
\lambda_2\theta(c_{0,s-1})&\theta(c_{0,0}) & \dots &\theta(c_{0,s-2}) \\
\lambda_2\theta(c_{1,s-1}) &\theta(c_{1,0}) & \dots &\theta(c_{1,s-2})\\
\vdots&\vdots&\ddots&\vdots\\
\lambda_2\theta(c_{l-1,s-1}) &\theta(c_{l-1,0}) & \dots &\theta(c_{l-1,s-2})
\end{pmatrix}\in \mathcal{C}.$$
If $\mathcal{C}$ is both column skew $\lambda_1$-constacyclic and row skew $\lambda_2$-constacyclic,
then we call $\mathcal{C}$ a {\it 2-D skew $(\lambda_1,\lambda_2)$-constacyclic code}.
\end{defn}

Define $R^{\circ}:=R[x,y;\rho,\theta]/\langle x^l-\lambda_1,y^s-\lambda_2\rangle_{\mathit{l}}.$
Consider the $R$-module isomorphism $R^{l\times s}\to R^{\circ}$ defined by $(a_{i,j})\mapsto\sum\limits_{j=0}^{s-1}\sum\limits_{i=0}^{l-1}a_{i,j}x^iy^j$
where $R^{l\times s}$ denotes the set of all $l\times s$ arrays. Then a codeword $c\in\mathcal{C}$ can be denoted by bivariate skew
polynomial $c(x,y)$ under above isomorphism.
\begin{thm}\label{T2}
A code $\mathcal{C}$ in $R^{\circ}$ is a 2-D skew $(\lambda_1,\lambda_2)$-constacyclic code if and only if $\mathcal{C}$ is a left $R[x,y;\rho,\theta]$-submodule of the left $R[x,y;\rho,\theta]$-module $R^{\circ}$.
\end{thm}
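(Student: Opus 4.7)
The plan is to verify the equivalence by unpacking what left $R[x,y;\rho,\theta]$-closure means in terms of the generators $x$ and $y$, and then identifying the action of these generators with the two constacyclic shifts. Since $\mathcal{C}$ is already an $R$-submodule of $R^\circ$ (it is a linear code), left $R[x,y;\rho,\theta]$-closure reduces to closure under left multiplication by $x$ and by $y$, because every bivariate skew polynomial is an $R$-linear combination of monomials $x^iy^j$ and every such monomial is a product $x^i\star y^j$ obtainable by iterated one-step multiplication.

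For the forward direction I would take a codeword $c(x,y)=\sum_{i=0}^{l-1}\sum_{j=0}^{s-1}c_{i,j}x^iy^j$ and compute
\[
x\star c(x,y)=\sum_{i,j}\rho(c_{i,j})x^{i+1}y^j
=\sum_{j=0}^{s-1}\rho(c_{l-1,j})x^ly^j+\sum_{j=0}^{s-1}\sum_{i=1}^{l-1}\rho(c_{i-1,j})x^iy^j
\]
in $R^\circ$. Using the defining relation $x^l\equiv\lambda_1$ and the commutativity of $R$ (to rewrite $\rho(c_{l-1,j})\lambda_1$ as $\lambda_1\rho(c_{l-1,j})$) and reading off the coefficient array via the stated isomorphism, I would recognize this exactly as the column skew $\lambda_1$-constacyclic shift of $c$. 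An analogous computation with $y\star c(x,y)$, using $y^s\equiv\lambda_2$, yields the row skew $\lambda_2$-constacyclic shift. Hence, if $\mathcal{C}$ is a left submodule, both shifts lie in $\mathcal{C}$ and $\mathcal{C}$ is 2-D skew $(\lambda_1,\lambda_2)$-constacyclic.

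For the converse I would run the same identifications in reverse: if $\mathcal{C}$ is closed under the column shift then by the above $x\star c(x,y)\in\mathcal{C}$ for all $c\in\mathcal{C}$, and similarly $y\star c(x,y)\in\mathcal{C}$. Iterating, $x^i y^j\star c(x,y)\in\mathcal{C}$ for every $i,j\in\mathbb{N}_0$. Finally, for any $f(x,y)=\sum a_{i,j}x^iy^j\in R[x,y;\rho,\theta]$ the identity $(a_{i,j}x^iy^j)\star c(x,y)=a_{i,j}\cdot(x^iy^j\star c(x,y))$ (which follows directly from the multiplication rule $ax^iy^j\star bx^ry^t=a\rho^i\theta^j(b)x^{i+r}y^{j+t}$ together with the fact that left multiplication by the ring element $a_{i,j}$ agrees with $R$-scalar multiplication in $R^\circ$) combined with $R$-linearity of $\mathcal{C}$ gives $f(x,y)\star c(x,y)\in\mathcal{C}$.

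The argument is essentially bookkeeping rather than geometry, and the only genuine subtlety is making sure that the two algebraic facts — the replacement $x^l\mapsto\lambda_1$ producing the ``wraparound'' row with the factor $\lambda_1$ in the correct position, and the commutation $\rho(c_{l-1,j})\lambda_1=\lambda_1\rho(c_{l-1,j})$ (which needs commutativity of $R$, not of $R[x,y;\rho,\theta]$) — combine correctly so that the resulting array matches the shift prescribed in the definition. Once that bookkeeping is done for both $x$ and $y$, the proof is complete without needing any centrality hypotheses on $x^l-\lambda_1$ or $y^s-\lambda_2$, since we work only with the left module structure of $R^\circ$.
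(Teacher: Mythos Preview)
Your proposal is correct and is precisely the standard argument the paper defers to: the paper's own proof consists solely of the line ``Similar to the proof of \cite[Theorem 3.2]{XH}'', and your computation that left multiplication by $x$ (resp.\ $y$) in $R^\circ$ realizes the column (resp.\ row) skew constacyclic shift, together with the reduction of submodule closure to closure under these two generators, is exactly that argument spelled out in detail.
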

\begin{proof}
Similar to the proof of \cite[Theorem 3.2]{XH}.
\end{proof}

As a direct consequence of Theorem \ref{T1}(3) and Theorem \ref{T2} we have the next result. 
\begin{cor} 
Let $|\langle\rho\rangle|\mid l$, $|\langle\theta\rangle|\mid s$ and $\lambda_1,\lambda_2\in R^{\rho,\theta}$.  A code $\mathcal{C}$ in $R^{\circ}$ is a 2-D skew $(\lambda_1,\lambda_2)$-constacyclic code if and only if $\mathcal{C}$ is a left ideal of $R^{\circ}$.
\end{cor}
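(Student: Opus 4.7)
The plan is to chain together the two results the corollary cites. Theorem~\ref{T1}(3) under the hypotheses $|\langle\rho\rangle|\mid l$, $|\langle\theta\rangle|\mid s$, and $\lambda_1,\lambda_2\in R^{\rho,\theta}$ guarantees that $I:=\langle x^l-\lambda_1,y^s-\lambda_2\rangle_{\mathit{l}}$ is a two-sided ideal of $R[x,y;\rho,\theta]$. Consequently, the quotient $R^{\circ}=R[x,y;\rho,\theta]/I$ inherits a genuine ring structure, not merely that of a left module, and the projection $\pi\colon R[x,y;\rho,\theta]\to R^{\circ}$ is a ring homomorphism.

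The next step is to observe the standard correspondence: because $I$ is two-sided, a subset $\mathcal{C}\subseteq R^{\circ}$ is a left ideal of the ring $R^{\circ}$ if and only if it is closed under left multiplication by arbitrary elements of $R[x,y;\rho,\theta]$ via $\pi$, i.e. if and only if $\mathcal{C}$ is a left $R[x,y;\rho,\theta]$-submodule of $R^{\circ}$. One direction is immediate; for the other, given a left ideal of $R^{\circ}$ one lifts any $f(x,y)\in R[x,y;\rho,\theta]$ to its coset $\overline{f(x,y)}\in R^{\circ}$ and uses $\overline{f}\star c=\overline{f\star c^{\ast}}$ for any lift $c^{\ast}$ of $c\in\mathcal{C}$, which is well defined precisely because $I$ is two-sided.

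With this equivalence in hand, apply Theorem~\ref{T2}, which identifies 2-D skew $(\lambda_1,\lambda_2)$-constacyclic codes in $R^{\circ}$ with left $R[x,y;\rho,\theta]$-submodules of $R^{\circ}$. Combining the two equivalences yields the claim. The only subtle point, and the one step worth spelling out carefully rather than waving through, is the need for $I$ to be two-sided in order to legitimately pass between ``left $R[x,y;\rho,\theta]$-submodule of $R^{\circ}$'' and ``left ideal of $R^{\circ}$''; this is exactly what Theorem~\ref{T1}(3) supplies, so the corollary really is a one-line consequence of the two cited results and no separate obstacle needs to be overcome.
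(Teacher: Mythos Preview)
Your proof is correct and follows exactly the approach the paper intends: the paper states the corollary as ``a direct consequence of Theorem~\ref{T1}(3) and Theorem~\ref{T2}'' without further detail, and your argument simply spells out why those two results combine to give the claim. The only elaboration you add --- that two-sidedness of $I$ is what allows one to identify left $R[x,y;\rho,\theta]$-submodules of $R^{\circ}$ with left ideals of the ring $R^{\circ}$ --- is precisely the unstated link the paper leaves to the reader.
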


\begin{thm} 
Let $f_1(x,y),f_2(x,y)\in R[x,y;\rho,\theta]$ be two nonzero bivariate polynomials where $f_2(x,y)$ is monic. Provided that
${\rm deg}(f_1(x,y))\\\geq deg(f_2(x,y))$, there exists a pair of polynomials $h(x,y)(\neq0),g(x,y)\in R[x,y;\rho,\theta]$
which satisfy $f_1(x,y)=h(x,y)\star f_2(x,y)+g(x,y)$ such that $g(x,y)=0$ or ${\rm deg}(f_2(x,y))\nleq{\rm deg}(g(x,y))
\big(\leftarrow deg(f_1(x,y))\big)$. 
\end{thm}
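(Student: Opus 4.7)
The plan is to imitate the classical Euclidean division by matching leading terms, and iterate using a well-founded induction on the quasi-degree with respect to the total lexicographic order ``$\rightarrow$''. Write $(k,t):={\rm deg}(f_1(x,y))$ with leading coefficient $a_{k,t}\neq 0$ and $(u,v):={\rm deg}(f_2(x,y))$ with leading coefficient $1$. The hypothesis ${\rm deg}(f_1)\geq{\rm deg}(f_2)$ in the componentwise partial order gives $k\geq u$ and $t\geq v$, so the monomial $q(x,y):=a_{k,t}x^{k-u}y^{t-v}$ is well-defined and nonzero.

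Next I would show that $q(x,y)\star f_2(x,y)$ has quasi-degree exactly $(k,t)$ with leading coefficient $a_{k,t}$. Indeed, using $f_2(x,y)=x^uy^v+\sum a'_{i,j}x^iy^j$ where each non-leading term satisfies $(i,j)\rightarrow(u,v)$, the product expands as
\begin{equation*}
q\star f_2=a_{k,t}x^ky^t+\sum a_{k,t}\rho^{k-u}\theta^{t-v}(a'_{i,j})x^{i+k-u}y^{j+t-v}.
\end{equation*}
Each shifted pair $(i+k-u,j+t-v)$ still satisfies $(i+k-u,j+t-v)\rightarrow(k,t)$, because shifting both coordinates by the same vector preserves the lex comparison. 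Hence $f_1(x,y)-q(x,y)\star f_2(x,y)$ either vanishes or has quasi-degree strictly $\rightarrow$-smaller than $(k,t)$.

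Now I would iterate. Set $h_0:=q$ and $r_0:=f_1-q\star f_2$; if $r_0=0$, or if $(u,v)\nleq{\rm deg}(r_0)$, take $h:=h_0$ and $g:=r_0$ and we are done. Otherwise ${\rm deg}(r_0)\geq(u,v)$ and the same construction applied to $r_0$ produces $q_1\neq0$ and $r_1$ with ${\rm deg}(r_1)\rightarrow{\rm deg}(r_0)$; updating $h\leftarrow h_0+q_1$ gives $f_1=h\star f_2+r_1$. Since ``$\rightarrow$'' on $\mathbb{N}_0^2$ is a well-order (any descending chain eventually drops the $y$-exponent, and for each fixed $y$-exponent the $x$-exponent can decrease only finitely often), this process terminates after finitely many steps with a pair $(h,g)$ satisfying $f_1=h\star f_2+g$, $h\neq 0$, and either $g=0$ or ${\rm deg}(f_2)\nleq{\rm deg}(g)$, while ${\rm deg}(g)\rightarrow{\rm deg}(f_1)$ by construction.

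The main technical point — and the place where one has to be careful — is the compatibility of the lex order with the skew multiplication: I must verify that multiplying a polynomial by a monomial $x^ay^b$ from the left does not spoil the ordering of its terms (only shifts the bidegrees by the common vector $(a,b)$). Once this shift-invariance of ``$\rightarrow$'' is established, the degree strictly drops at every step, the induction closes, and the monicity of $f_2$ is precisely what makes the cancellation of the leading term possible without having to invert any coefficient of $R$.
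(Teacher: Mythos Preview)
Your argument is correct and is exactly the leading-term cancellation procedure the paper intends when it simply defers to \cite[Theorem 2.1]{XH} without giving further details. One notational slip: in the paper's convention ``$\rightarrow$'' points from the lex-larger pair to the lex-smaller one, so your final clause should read $\deg(f_1)\rightarrow\deg(g)$ (equivalently $\deg(g)\leftarrow\deg(f_1)$) rather than the reverse.
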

\begin{proof}
The proof is similar to that of \cite[Theorem 2.1]{XH}.
\end{proof}

\begin{lem}
Let $\mathcal{C}$ be a 2-D skew $(\lambda_1,\lambda_2)$-constacyclic code in $R^{\circ}$ and $g(x,y)$
be a monic polynomial in
$R[x,y;\rho,\theta]$.
Then $g(x,y)$ is of the minimum degree (with respect to ``$\leq$'') in $\mathcal{C}$ if and only if $\mathcal{C}=\langle g(x,y)\rangle_{\mathit{l}}$.
\end{lem}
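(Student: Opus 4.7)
The plan is to adapt the classical argument for cyclic codes, using the non-commutative Euclidean-style division stated in the theorem just above together with the left-submodule property established in Theorem~\ref{T2}.

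For the reverse implication $(\Leftarrow)$, I would assume $\mathcal{C}=\langle g(x,y)\rangle_{\mathit{l}}$, take an arbitrary nonzero $f\in\mathcal{C}$, and write $f=h(x,y)\star g(x,y)$ for some $h\in R[x,y;\rho,\theta]$. Since $g$ is monic, the top $\star$-term of $h\star g$ is obtained from the top term $h_{k,t}x^{k}y^{t}$ of $h$ multiplied by the leading monomial $x^{a}y^{b}$ of $g$, giving leading coefficient $h_{k,t}\rho^{k}\theta^{t}(1)=h_{k,t}\neq 0$ and quasi-degree $(k+a,t+b)$. Hence ${\rm deg}(f)={\rm deg}(h)+{\rm deg}(g)\geq{\rm deg}(g)$ in the partial order, which shows that $g$ attains the minimum quasi-degree in $\mathcal{C}$.

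For the forward implication $(\Rightarrow)$, assume that $g$ is a monic polynomial of minimum quasi-degree in $\mathcal{C}$. The inclusion $\langle g\rangle_{\mathit{l}}\subseteq\mathcal{C}$ is immediate: $g\in\mathcal{C}$ and, by Theorem~\ref{T2}, $\mathcal{C}$ is closed under left multiplication by elements of $R[x,y;\rho,\theta]$. For the reverse inclusion, I would pick an arbitrary $f\in\mathcal{C}$ and apply the preceding division theorem (valid because ${\rm deg}(f)\geq{\rm deg}(g)$ by minimality) to write $f=h\star g+r$ with either $r=0$ or ${\rm deg}(g)\nleq{\rm deg}(r)$. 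Since $h\star g\in\mathcal{C}$, also $r=f-h\star g\in\mathcal{C}$. If $r$ were nonzero, it would be an element of $\mathcal{C}$ whose quasi-degree fails to dominate ${\rm deg}(g)$, contradicting the minimality of ${\rm deg}(g)$. Therefore $r=0$ and $f=h\star g\in\langle g\rangle_{\mathit{l}}$.

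The main delicacy I anticipate is the interpretation of ``minimum'' with respect to the partial order $\leq$ on $\mathbb{N}_{0}^{2}$. The argument really needs the strong reading ${\rm deg}(g)\leq{\rm deg}(f)$ for every nonzero $f\in\mathcal{C}$, so that (a) the hypothesis ${\rm deg}(f)\geq{\rm deg}(g)$ of the division theorem is automatic, and (b) a nonzero remainder $r$ with ${\rm deg}(g)\nleq{\rm deg}(r)$ really does violate minimality. Under the weaker reading of order-theoretic minimality alone, some $f\in\mathcal{C}$ might have degree incomparable with ${\rm deg}(g)$, and the division step would require a preliminary reduction; clarifying this convention is the only nontrivial point in the write-up.
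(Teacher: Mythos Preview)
Your forward implication $(\Rightarrow)$ is exactly the paper's argument: divide an arbitrary codeword by $g$, use the left-submodule property (Theorem~\ref{T2}) to place the remainder in $\mathcal{C}$, and invoke minimality to force $r=0$. The paper likewise tacitly adopts the strong reading of ``minimum'' that you flag in your closing paragraph, so your diagnosis there is accurate and matches how the paper proceeds.

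For the reverse implication $(\Leftarrow)$ the paper writes only ``The `if' part is obvious'', whereas you supply a degree computation. That computation, however, is carried out in $R[x,y;\rho,\theta]$ rather than in the quotient $R^{\circ}$: the equality $f=h\star g$ holds only modulo $\langle x^{l}-\lambda_{1},\,y^{s}-\lambda_{2}\rangle_{l}$, and the canonical representative of $h\star g$ in $R^{\circ}$ can have strictly smaller quasi-degree than $\deg(h)+\deg(g)$ once reduction is performed. Concretely, with $\rho,\theta$ trivial, $l=3$, $s=2$, $\lambda_{1}=\lambda_{2}=1$, and $g=x^{2}$ (monic of quasi-degree $(2,0)$), one has $x\star g=x^{3}\equiv 1$ in $R^{\circ}$, so $\langle g\rangle_{l}=R^{\circ}$ while $g$ plainly does not have minimum degree. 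Thus $(\Leftarrow)$ actually fails without an extra hypothesis such as $g$ being a right divisor of $(x^{l}-\lambda_{1})\star(y^{s}-\lambda_{2})$ (which the subsequent theorem supplies). Your argument cannot be repaired as written, and the paper's ``obvious'' conceals the very same gap.
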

\begin{proof}
The ``if'' part is obvious. Let $g(x,y)$ be a monic polynomial of the minimum degree (with respect to ``$\leq$'')  in $\mathcal{C}$. If $c(x,y)\in\mathcal{C}$, then by the Division Algorithm in $R[x,y;\rho,\theta]$, there exists a pair of polynomials
$h(x,y)$$(\neq0)$, $r(x,y)\in R[x,y;\rho,\theta]$ which satisfy
$c(x,y)=h(x,y)\star g(x,y)+r(x,y)$, where either $r(x,y)=0$ or 
${\rm deg}(g(x,y))\nleq{\rm deg}(r(x,y))\big(\longleftarrow{\rm deg}(c(x,y))\big)$. As $\mathcal{C}$ is an $R[x,y;\rho,\theta]$-module,
$r(x,y)\in\mathcal{C}$ and the minimality of the degree of $g(x,y)$ implies $r(x,y)=0$.
\end{proof}

\begin{thm}
Let $\mathcal{C}$ be a 2-D skew $(\lambda_1,\lambda_2)$-constacyclic code in $\mathbb{F}_q^{\circ}$ and $g(x,y)$ be a 
monic generator polynomial of $\mathcal{C}$ in $\mathbb{F}_q[x,y;\rho,\theta]$. Then the following conditions hold:
\begin{enumerate}
\item  $g(x,y)$ divides $(x^l-\lambda_1)\star(y^s-\lambda_2)$.
\item Suppose that ${\rm deg}(g(x,y))=(l-k,s-t)$. Then
$$\mathbb{B}={\begin{Bmatrix}
g(x,y),&y\star g(x,y),&\dots&,y^{t-1}\star g(x,y) \\
x\star g(x,y),&xy\star g(x,y),&\dots&,xy^{t-1}\star g(x,y) \\
\vdots&\vdots&\ddots&\vdots\\
x^{k-1}\star g(x,y),&x^{k-1}y\star g(x,y),&\dots&,x^{k-1}y^{t-1}\star g(x,y) 
\end{Bmatrix}}$$
is a basis for $\mathcal{C}$ over $\mathbb{F}_q$ and so $|\mathcal{C}|=q^{kt}$.
\end{enumerate}
\end{thm}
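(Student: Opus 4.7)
My plan is to deduce both parts from the Division Algorithm proved just above together with the minimality argument of the preceding lemma. Under the hypotheses on $\rho,\theta,\lambda_1,\lambda_2$ that underlie the 2-D skew constacyclic setting (\textit{cf.}\ Theorem~\ref{T1}(3)), the ideal $\langle x^l-\lambda_1,y^s-\lambda_2\rangle_{\mathit{l}}$ is two-sided, so $(x^l-\lambda_1)\star(y^s-\lambda_2)$ vanishes in $\mathbb{F}_q^\circ$ and every element of $\mathcal{C}$ admits a unique normal-form representative of $x$-degree $<l$ and $y$-degree $<s$.

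For part (1), I would apply the Division Algorithm in $\mathbb{F}_q[x,y;\rho,\theta]$ to write
\[
(x^l-\lambda_1)\star(y^s-\lambda_2)=h(x,y)\star g(x,y)+r(x,y),
\]
with $r=0$ or $\deg g\nleq\deg r$. Projecting to $\mathbb{F}_q^\circ$ gives $[r]=-[h]\star[g]\in\mathcal{C}$. The minimality of $\deg g$ in $\mathcal{C}$ (any nonzero codeword in normal form has degree $\geq\deg g$, since $\mathcal{C}=\langle g\rangle_{\mathit{l}}$) then combines with the remainder condition $\deg g\nleq\deg r$ to force $r=0$, yielding $g(x,y)\mid(x^l-\lambda_1)\star(y^s-\lambda_2)$.

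For part (2), I would verify linear independence and spanning of $\mathbb{B}$ separately. Linear independence: every term of $x^iy^j\star g(x,y)$ (for $0\leq i<k$, $0\leq j<t$) lies in the box $[0,l-1]\times[0,s-1]$, so an $\mathbb{F}_q$-linear combination $\sum a_{i,j}(x^iy^j\star g)$ is already in normal form; its vanishing in $\mathbb{F}_q^\circ$ thus means vanishing as a polynomial, and the absence of zero divisors in $\mathbb{F}_q[x,y;\rho,\theta]$ together with $g$ being monic forces each $a_{i,j}=0$. Spanning: for $c\in\mathcal{C}$ in normal form, the preceding lemma gives $c=h(x,y)\star g(x,y)$ in $\mathbb{F}_q[x,y;\rho,\theta]$. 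I would inspect the iterative division producing $h$ and verify by induction that every intermediate polynomial remains inside the normal-form box, so each subtracted monomial multiple $ax^uy^v\star g$ has $u\in[0,k-1]$ and $v\in[0,t-1]$; hence $h$ is an $\mathbb{F}_q$-linear combination of the monomials indexing $\mathbb{B}$. Consequently $c$ lies in the $\mathbb{F}_q$-span of $\mathbb{B}$, and combined with independence this gives $\dim_{\mathbb{F}_q}\mathcal{C}=kt$, i.e., $|\mathcal{C}|=q^{kt}$.

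The main obstacle lies in part (1): the dividend is not in normal form, so the clean box-preservation invariant that works for the spanning argument of part (2) breaks down, and the remainder $r$ may carry coordinates outside $[0,l-1]\times[0,s-1]$. To close this gap, one must pass to the normal-form representative of $[r]\in\mathcal{C}$ and reconcile its degree with both the minimality of $\deg g$ and the Division Algorithm's constraint $\deg g\nleq\deg r$, possibly reducing $r$ further using the centrality of $x^l-\lambda_1$ and $y^s-\lambda_2$ before invoking the minimality principle.
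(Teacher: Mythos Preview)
Your argument for part~(1) and for linear independence in part~(2) is essentially the paper's own. In part~(1) the paper divides $(x^l-\lambda_1)\star(y^s-\lambda_2)$ by $g$ exactly as you do, obtains a remainder $r$ with $\deg g\nleq\deg r$, observes $r\equiv -f\star g\in\mathcal{C}$, and invokes the minimality of $\deg g$ with respect to~$\leq$ to force $r=0$. The paper does \emph{not} address the normal-form obstacle you raise in your final paragraph; it simply treats the condition $\deg g\nleq\deg r$ as an immediate contradiction to minimality, without first reducing $r$ into the box $[0,l-1]\times[0,s-1]$. So your concern is a refinement the paper leaves implicit, not something it resolves.

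The one substantive difference is in the spanning half of part~(2). Instead of your inductive box-preservation argument tracking the iterative division, the paper uses a single degree computation: for $f=q\star g\in\mathcal{C}$ in normal form one has $\deg f\leq(l-1,s-1)$, and since $\mathbb{F}_q[x,y;\rho,\theta]$ has no zero divisors, $\deg q=\deg f-\deg g\leq(k-1,t-1)$, whence $q$ lies in the span of $\{x^iy^j:0\leq i\leq k-1,\ 0\leq j\leq t-1\}$ and $\mathbb{B}$ generates~$\mathcal{C}$. This replaces your step-by-step invariant with one line of degree arithmetic. Your approach would in principle give finer control over the monomials of $q$ (the quasi-degree bound only constrains the lex-leading term), but it also needs the intermediate remainders to stay inside the box, which is not automatic when $g$ itself may carry monomials $x^iy^j$ with $j<s-t$ and $i$ close to $l-1$; the paper's argument sidesteps that bookkeeping at the cost of the same implicit identification of ``$\deg q\leq(k-1,t-1)$'' with ``all monomials of $q$ lie in $[0,k-1]\times[0,t-1]$''.
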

\begin{proof}
(1) Assume that $g(x,y)$ does not divide $(x^l-\lambda_1)\star(y^s-\lambda_2)$.
By the Division Algorithm, there exist $f(x,y)(\neq0),r(x,y)\in \mathbb{F}_q[x,y;\rho,\theta]$ which satisfy
$$(x^l-\lambda_1)\star(y^s-\lambda_2)=f(x,y)\star g(x,y)+r(x,y)$$ such that either $r(x,y)=0$ or 
${\rm deg}(g(x,y))\nleq{\rm deg}(r(x,y))$. If ${\rm deg}(g(x,y))\\\nleq{\rm deg}(r(x,y))$, then
\begin{eqnarray*} 
r(x,y)&=&(x^l-\lambda_1)\star(y^s-\lambda_2)-f(x,y)\star g(x,y)\\
&\equiv&-f(x,y)\star g(x,y)~~\big(\mbox{mod}~\langle x^l-\lambda_1,y^s-\lambda_2\rangle_{\mathit{l}}\big)
\end{eqnarray*}
is also in $\mathcal{C}$ which contradicts the minimality of ${\rm deg}(g(x,y))$ with respect to ``$\leq$''.
Consequently $r(x,y)=0$ and so we are done.\\

(2) 
Let $0\neq f(x,y)\in \mathcal{C}=\langle g(x,y)\rangle_{\mathit{l}}$. Then there exists $q(x,y)\in \mathbb{F}_q[x,y;\rho,\theta]$ such that 
$f(x,y)=q(x,y)\star g(x,y)$. Notice that ${\rm deg}(f(x,y))\\\leq(l-1,s-1)$. Hence 
$${\rm deg}(q(x,y))={\rm deg}(f(x,y))-{\rm deg}(g(x,y))$$
$$\hspace{4.6cm}\leq(l-1,s-1)-(l-k,s-t)=(k-1,t-1).$$
Therefore, $\mathbb{B}$ is a generating set for $\mathcal{C}$.
Assume that $\{a_{i,j}|0\leq i\leq k-1, 0\leq j\leq t-1\}$ be a subset of $R$ such that 
$$\sum_{j=0}^{t-1}\sum_{i=0}^{k-1} a_{i,j}x^iy^j\star g(x,y)=0.$$
Given $r(x,y):=\sum\limits_{j=0}^{t-1}\sum\limits_{i=0}^{k-1} a_{i,j}x^iy^j$ such that $r(x,y)\neq0$, we have that ${\rm deg}(r(x,y))\leq(k-1,t-1)$ and $r(x,y)\star g(x,y)=0$. Therefore, there exist $q_1(x,y)$ and $q_2(x,y)$ in $\mathbb{F}_q[x,y;\rho,\theta]$ such that $$r(x,y)\star g(x,y)=q_1(x,y)\star(x^l-\lambda_1)+q_2(x,y)\star(y^s-\lambda_2).$$ Hence
{\scriptsize
\begin{eqnarray*}
{\rm deg}\big(q_1(x,y)\star(x^l-\lambda_1)+q_2(x,y)\star(y^s-\lambda_2)\big)&=&{\rm deg}(r(x,y))+{\rm deg}(g(x,y))\\
&\leq&(l-1,s-1),
\end{eqnarray*} }
which is a contradiction. Consequently $r(x,y)=0$
that implies $a_{i,j}=0$ for all $0\leq i\leq k-1$ and $0\leq j\leq t-1$.
\end{proof}

\begin{prop}
Let $R$ be a domain and $\lambda_1,\lambda_2$ be units in $R$. Suppose that $\mathcal{C}$ is a 2-D skew $(\lambda_1,\lambda_2)$-constacyclic 
code in $R^{\circ}$ that is generated by 
$$g(x,y)=\sum_{j=0}^{s-t-1}\sum_{i=0}^{l-k-1}g_{i,j}x^iy^j+\sum_{j=0}^{s-t}x^{l-k}y^{j}+\sum_{i=0}^{l-k-1}x^iy^{s-t}.$$ 
Then $g(x,y)\star xy\in\mathcal{C}$ if and only if $g(x,y)\in R^{\rho\theta}[x,y;\rho,\theta]$.
\end{prop}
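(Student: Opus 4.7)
The plan is to reduce the membership $g(x,y)\star xy\in\mathcal{C}$ to a cleaner condition by extracting a commutator, exploiting that $\mathcal{C}$ is closed under left multiplication by $R[x,y;\rho,\theta]$ (Theorem~\ref{T2}). Since $g\in\mathcal{C}$ one already has $xy\star g\in\mathcal{C}$, so $g\star xy\in\mathcal{C}$ if and only if $h(x,y):=g(x,y)\star xy-xy\star g(x,y)\in\mathcal{C}$. Expanding both products with the rule $ax^iy^j\star bx^ry^u=a\rho^i\theta^j(b)x^{i+r}y^{j+u}$ and using that $\rho\theta$ fixes $1$, the contributions from the two ``boundary'' sums of $g$ (whose coefficients are all $1$) cancel, yielding
\[
h(x,y)=\sum_{j=0}^{s-t-1}\sum_{i=0}^{l-k-1}\bigl(g_{i,j}-\rho\theta(g_{i,j})\bigr)\,x^{i+1}y^{j+1}.
\]
Every monomial of $h$ lies in $[1,l-k]\times[1,s-t]\subseteq[0,l-1]\times[0,s-1]$, so $h$ is already in canonical reduced form in $R^{\circ}$ and no worries about reductions modulo $\langle x^l-\lambda_1,y^s-\lambda_2\rangle_l$ arise.

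The ``if'' direction is then immediate: if $g\in R^{\rho\theta}[x,y;\rho,\theta]$ then each $g_{i,j}$ is fixed by $\rho\theta$, so $h=0\in\mathcal{C}$. For the converse suppose $h\in\mathcal{C}$. Using the division/basis argument from the preceding theorem (which goes through over the domain $R$ because $g$ is monic), write $h=q(x,y)\star g(x,y)$ with $\deg(q)\leq(k-1,t-1)$; since $q\star g$ then has polynomial quasi-degree at most $(l-1,s-1)$, both sides are in reduced form and the identity already holds in $R[x,y;\rho,\theta]$. Quasi-degree additivity $\deg(q\star g)=\deg(q)+\deg(g)$ is valid over a domain since monicity of $g$ prevents any cancellation in the top term, so together with $\deg(g)=(l-k,s-t)$ and the coordinate bounds on the support of $h$ one forces $\deg(q)=(0,0)$; that is, $q=a$ is a constant in $R$.

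It then remains to eliminate the possibility $a\neq 0$. Comparing coefficients at position $(l-k,0)$: the right-hand side $a\cdot g$ contributes $a\cdot 1=a$ coming from the monomial $x^{l-k}$ in the second sum of $g$, while every monomial of $h$ has $y$-exponent at least $1$ and hence its coefficient at $(l-k,0)$ is $0$. Because $R$ is a domain this yields $a=0$, whence $q=0$, $h=0$, and $g_{i,j}=\rho\theta(g_{i,j})$ for every $(i,j)$ in range; combined with $\rho\theta(1)=1$ on the boundary terms this is exactly $g\in R^{\rho\theta}[x,y;\rho,\theta]$. The principal obstacle lies in this final step: only the very specific ``L-shape'' prescribed for $g$ (a unit coefficient $1$ at the corner $(l-k,0)$ of its support) lets one rule out a nonzero-constant quotient, so both the precise form of $g$ and the domain hypothesis on $R$ are essential and enter exactly here.
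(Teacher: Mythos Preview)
Your argument is correct and follows the same commutator strategy as the paper: both compute $xy\star g - g\star xy$, observe it must equal $a\star g$ for some constant $a\in R$, and then kill $a$ by comparing a single coefficient. The only difference is which coefficient is read off---the paper looks at position $(0,0)$ and first argues that $g_{0,0}$ is a unit (because $g$ right-divides $(x^l-\lambda_1)\star(y^s-\lambda_2)$ and $\lambda_1\lambda_2$ is a unit), whereas you look at position $(l-k,0)$, where the coefficient of $g$ is already $1$ by its prescribed shape; your choice is marginally cleaner and in fact makes the domain hypothesis unnecessary at that final step.
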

\begin{proof}
Clearly $xy\star g(x,y)\in\mathcal{C}$. Assume that $g(x,y)\star xy\in\mathcal{C}$.
Therefore 
\begin{eqnarray*}
xy\star g(x,y)-g(x,y) xy&=&\sum_{j=0}^{s-t-1}\sum_{i=0}^{l-k-1}(\rho\theta(g_{i,j})-g_{i,j})x^{i+1}y^{j+1}\\&=&p(x,y)\star g(x,y),
\end{eqnarray*} 
for some $p(x,y)\in R[x,y;\rho,\theta]$. It is easy to see that $p(x,y)$ is constant and $p(x,y)g_{0,0}=0$.
Since $g(x,y)$ is a right divisor of $(x^l-\lambda_1)\star(y^s-\lambda_2)$ and $\lambda_1,\lambda_2$ are units, then 
$g_{0,0}$ is a unit. Hence $p(x,y)=0$ and so $\rho\theta(g_{i,j})=g_{i,j}$ for all $0\leq i\leq l-k-1$ and $0\leq j\leq s-t-1$.
Therefore $g(x,y)\in R^{\rho\theta}[x,y;\rho,\theta]$.\\
For the converse, assume that $g(x,y)\in R^{\rho\theta}[x,y;\rho,\theta]$. Thus $xy\star g_{i,j}=\rho\theta(g_{i,j})xy=g_{i,j}xy$
for all $0\leq i\leq l-k-1$ and $0\leq j\leq s-t-1$. Hence $g(x,y) xy=xy\star g(x,y)\in\mathcal{C}$, so we are done.
\end{proof}

\begin{defn}
For two $l\times s$ arrays $c=\left(c_{i,j}\right)$ and $d=(d_{i,j})$ we define:\\
$$c\odot d:=\sum\limits_{j=0}^{s-1}\sum\limits_{i=0}^{l-1} c_{i,j}d_{i,j}.$$
We say that $c$, $d$ are orthogonal if $c\odot d=0$.
The dual code of a linear code $\mathcal{C}$ of length $ls$ is the set of all $l\times s$ arrays orthogonal to all 
codewords of $\mathcal{C}$. The dual code of $\mathcal{C}$ is denoted by $\mathcal{C}^{\bot}$.
\end{defn}

\begin{thm}
Let $|\langle\rho\rangle|\mid l$, $|\langle\theta\rangle|\mid s$ and $\lambda_1,\lambda_2$ be units in $R$ such that $\lambda_1\in R^{\rho}, \lambda_2\in R^{\theta}$. Suppose that $\mathcal{C}$ is a code of lenght $ls$ over $R$. Then $\mathcal{C}$ is 2-D skew $(\lambda_1,\lambda_2)$-constacyclic if and only if $\mathcal{C}^{\bot}$ is 2-D skew $(\lambda_1^{-1},\lambda_2^{-1})$-constacyclic. In particular, if $\lambda_1^2=\lambda_2^2=1$,
then $\mathcal{C}$ is 2-D skew $(\lambda_1,\lambda_2)$-constacyclic if and only if
$\mathcal{C}^{\bot}$ is 2-D skew $(\lambda_1,\lambda_2)$-constacyclic.
\end{thm}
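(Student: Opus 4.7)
The plan is to introduce the skew shift operators and reduce everything to a single bilinear identity relating them under $\odot$. Let $\tau_1,\tau_2:R^{l\times s}\to R^{l\times s}$ denote the column skew $\lambda_1$-shift and row skew $\lambda_2$-shift from the definition, so that $\mathcal{C}$ is 2-D skew $(\lambda_1,\lambda_2)$-constacyclic iff $\tau_1(\mathcal{C})\subseteq\mathcal{C}$ and $\tau_2(\mathcal{C})\subseteq\mathcal{C}$. Let $\sigma_1,\sigma_2$ denote the analogous operators for the constants $\lambda_1^{-1},\lambda_2^{-1}$; these constants still lie in $R^{\rho}$ and $R^{\theta}$ respectively, because $\rho,\theta$ are ring automorphisms. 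A direct iteration, using $|\langle\rho\rangle|\mid l$ and $\rho(\lambda_1)=\lambda_1$, yields $\tau_1^{\,l}=\lambda_1\cdot\mathrm{id}$: every entry returns to its original row after $l$ shifts, is hit by $\rho^l=\mathrm{id}$, and picks up the factor $\lambda_1$ exactly once while passing through row $0$. Since $\lambda_1$ is a unit, $\tau_1$ is therefore bijective on $R^{l\times s}$, and hence $\tau_1(\mathcal{C})\subseteq\mathcal{C}$ forces $\tau_1(\mathcal{C})=\mathcal{C}$. The same applies to $\sigma_1,\tau_2,\sigma_2$.

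The key computation is
$$\tau_1(c)\odot\sigma_1(d)=\rho(c\odot d),\qquad \tau_2(c)\odot\sigma_2(d)=\theta(c\odot d),$$
valid for all $c,d\in R^{l\times s}$. For the first identity, I would expand column by column: the wrap-around term contributes $\bigl(\lambda_1\rho(c_{l-1,j})\bigr)\bigl(\lambda_1^{-1}\rho(d_{l-1,j})\bigr)=\rho(c_{l-1,j}d_{l-1,j})$, so the $\lambda_1$'s cancel, and the remaining contributions for $i=0,\dots,l-2$ collapse to $\sum_{i}\rho(c_{i,j}d_{i,j})$; summing over $j$ and using that $\rho$ is a ring homomorphism gives $\rho(c\odot d)$.

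For the forward direction, assume $\mathcal{C}$ is 2-D skew $(\lambda_1,\lambda_2)$-constacyclic, and take $d\in\mathcal{C}^{\bot}$ and any $c\in\mathcal{C}$. Using the bijectivity above, write $c=\tau_1(c')$ for some $c'\in\mathcal{C}$; then $c\odot\sigma_1(d)=\tau_1(c')\odot\sigma_1(d)=\rho(c'\odot d)=\rho(0)=0$. Hence $\sigma_1(d)\in\mathcal{C}^{\bot}$, and the analogous argument with $\tau_2,\sigma_2$ gives $\sigma_2(d)\in\mathcal{C}^{\bot}$. So $\mathcal{C}^{\bot}$ is 2-D skew $(\lambda_1^{-1},\lambda_2^{-1})$-constacyclic. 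The converse follows by applying the forward direction to $\mathcal{C}^{\bot}$ with constants $(\lambda_1^{-1},\lambda_2^{-1})$, which shows that $\mathcal{C}^{\bot\bot}$ is 2-D skew $(\lambda_1,\lambda_2)$-constacyclic, and we invoke the standard identification $\mathcal{C}=\mathcal{C}^{\bot\bot}$ for linear codes in the coding-theoretic setting at hand. The ``in particular'' clause is immediate, since $\lambda_i^2=1$ gives $\lambda_i^{-1}=\lambda_i$.

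The main obstacle, small but easy to mishandle, is the bookkeeping in the key identity: one must carefully track the wrap-around term so that the $\lambda_1$ in $\tau_1(c)$ and the $\lambda_1^{-1}$ in $\sigma_1(d)$ cancel to yield exactly one application of $\rho$ (rather than a twisted version), which is where the hypothesis $\lambda_1\in R^{\rho}$ enters essentially. Everything else is a straightforward consequence of this identity and the bijectivity of the shift operators.
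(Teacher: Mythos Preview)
Your argument is correct and follows essentially the same strategy as the paper's: both show that the $\lambda_1^{-1}$-column shift of any $d\in\mathcal{C}^{\bot}$ is orthogonal to every $c\in\mathcal{C}$ by exploiting closure of $\mathcal{C}$ under the $\lambda_1$-column shift (and similarly for rows), and then invoke $(\mathcal{C}^{\bot})^{\bot}=\mathcal{C}$ for the converse. The paper carries this out by applying the column shift $l-1$ times to $c$ and manipulating the resulting orthogonality relation directly, whereas you package the same computation as the single bilinear identity $\tau_1(c)\odot\sigma_1(d)=\rho(c\odot d)$ together with surjectivity of $\tau_1|_{\mathcal{C}}$.

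One small correction to your closing commentary: since $R$ is commutative, the cancellation in the wrap-around term, $(\lambda_1\rho(c_{l-1,j}))(\lambda_1^{-1}\rho(d_{l-1,j}))=\rho(c_{l-1,j}d_{l-1,j})$, requires no hypothesis on $\lambda_1$ at all. The place where $\lambda_1\in R^{\rho}$ is genuinely needed is exactly where you first invoke it, in proving $\tau_1^{\,l}=\lambda_1\cdot\mathrm{id}$; without it, $\tau_1^{\,l}$ is only a diagonal operator with row-dependent entries $\rho^{i}(\lambda_1)$, and you could not conclude $\tau_1^{-1}(\mathcal{C})\subseteq\mathcal{C}$ from linearity alone. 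This does not affect the validity of your proof.
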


\begin{proof}
First of all, $\lambda_1\in R^{\rho}, \lambda_2\in R^{\theta}$ imply that $\lambda_1\in R^{\rho^{-1}}, \lambda_2\in R^{\theta^{-1}}$. Let $c=\left(c_{i,j}\right)\in \mathcal{C}$ and $b=(b_{i,j})\in \mathcal{C}^{\bot}$ be two $l\times s$ arrays.
Since $\mathcal{C}$ is a column $\lambda_1$-constacyclic code, then 
$$\begin{pmatrix}
\rho^{l-1}(\lambda_1 c_{1,0})&\rho^{l-1}(\lambda_1 c_{1,1})&\dots &\rho^{l-1}(\lambda_1 c_{1,s-1})\\
\rho^{l-1}(\lambda_1 c_{2,0})&\rho^{l-1}(\lambda_1 c_{2,1})&\dots &\rho^{l-1}(\lambda_1 c_{2,s-1})\\
\vdots&\vdots&\ddots&\vdots\\
\rho^{l-1}(\lambda_1 c_{l-1,0})&\rho^{l-1}(\lambda_1 c_{l-1,1})&\dots &\rho^{l-1}(\lambda_1 c_{l-1,s-1})\\
\rho^{l-1}(c_{0,0}) &\rho^{l-1}(c_{0,1}) & \dots &\rho^{l-1}(c_{0,s-1})
\end{pmatrix}$$ 
is in $\mathcal{C}$ and so it is orthogonal to $b=(b_{i,j})$, i.e.,
$$0=\sum_{i=1}^{l-1}\sum_{j=0}^{s-1}\rho^{l-1}(\lambda_1 c_{i,j})b_{i-1,j}+\sum_{j=0}^{s-1}\rho^{l-1}(c_{0,j})b_{l-1,j}$$
$$\hspace{1.5cm}=\lambda_1\left(\sum_{i=1}^{l-1}\sum_{j=0}^{s-1}\rho^{l-1}(c_{i,j})b_{i-1,j}+
\sum_{j=0}^{s-1}\rho^{l-1}(c_{0,j})\lambda_1^{-1}b_{l-1,j}\right)$$
Therefore 
$$\sum_{i=1}^{l-1}\sum_{j=0}^{s-1}\rho^{l-1}(c_{i,j})b_{i-1,j}+
\sum_{j=0}^{s-1}\rho^{l-1}(c_{0,j})\lambda_1^{-1}b_{l-1,j}=0.$$
Hence 
$$0=\rho(0)=\sum_{i=1}^{l-1}\sum_{j=0}^{s-1}c_{i,j}\rho(b_{i-1,j})+
\sum_{j=0}^{s-1}c_{0,j}\lambda_1^{-1}\rho(b_{l-1,j}),$$
which shows that 
$$\begin{pmatrix}
\lambda_1^{-1}\rho(b_{l-1,0})&\lambda_1^{-1}\rho(b_{l-1,1})& \dots &\lambda_1^{-1}\rho(b_{l-1,s-1})\\
\rho(b_{0,0})&\rho(b_{0,1})& \dots &\rho(b_{0,s-1}) \\
\vdots&\vdots&\ddots&\vdots\\
\rho(b_{l-2,0}) &\rho(b_{l-2,1}) & \dots &\rho(b_{l-2,s-1})
\end{pmatrix}$$ 
is orthogonal to $c=(c_{i,j})$ and so it belongs to
$\mathcal{C}^{\bot}$. Thus $\mathcal{C}^{\bot}$ is column $\lambda_1^{-1}$-constacyclic.
On the other hand, since $\mathcal{C}$ is a row $\lambda_2$-constacyclic code, then 
$$\begin{pmatrix}
\theta^{s-1}(\lambda_2 c_{0,1})&\dots&\theta^{s-1}(\lambda_2 c_{0,s-1})&\theta^{s-1}(c_{0,0})\\
\theta^{s-1}(\lambda_2 c_{1,1})&\dots&\theta^{s-1}(\lambda_2 c_{1,s-1})&\theta^{s-1}(c_{1,0})\\
\vdots&\ddots&\vdots&\vdots\\
\theta^{s-1}(\lambda_2 c_{l-1,1})&\dots&\theta^{s-1}(\lambda_2 c_{l-1,s-1})&\theta^{s-1}(c_{l-1,0})
\end{pmatrix}$$ 
is in $\mathcal{C}$ and so it is orthogonal to $b=(b_{i,j})$, i.e.,
$$0=\sum_{j=1}^{s-1}\sum_{i=0}^{l-1}\theta^{s-1}(\lambda_2 c_{i,j})b_{i,j-1}+\sum_{i=0}^{l-1}\theta^{s-1}(c_{i,0})b_{i,s-1}$$
$$\hspace{1.5cm}=\lambda_2\left(\sum_{j=1}^{s-1}\sum_{i=0}^{l-1}\theta^{s-1}(c_{i,j})b_{i,j-1}+\sum_{i=0}^{l-1}\theta^{s-1}(c_{i,0})\lambda_2^{-1}b_{i,s-1}\right)$$
Thus
$$\sum_{j=1}^{s-1}\sum_{i=0}^{l-1}\theta^{s-1}(c_{i,j})b_{i,j-1}+\sum_{i=0}^{l-1}\theta^{s-1}(c_{i,0})\lambda_2^{-1}b_{i,s-1}=0.$$
So we have  
$$0=\theta(0)=\sum_{j=1}^{s-1}\sum_{i=0}^{l-1}c_{i,j}\theta(b_{i,j-1})+\sum_{i=0}^{l-1}c_{i,0}
\lambda_2^{-1}\theta(b_{i,s-1}),$$
which shows that 
$$\begin{pmatrix}
\lambda_2^{-1}\theta(b_{0,s-1})&\theta(b_{0,0}) & \dots &\theta(b_{0,s-2}) \\
\lambda_2^{-1}\theta(b_{1,s-1}) &\theta(b_{1,0}) & \dots &\theta(b_{1,s-2})\\
\vdots&\vdots&\ddots&\vdots\\
\lambda_2^{-1}\theta(b_{l-1,s-1}) &\theta(b_{l-1,0}) & \dots &\theta(b_{l-1,s-2})
\end{pmatrix}$$
is orthogonal to $c=(c_{i,j})$ and so it belongs to
$\mathcal{C}^{\bot}$. Thus, it follows that $\mathcal{C}^{\bot}$ is row $\lambda_2^{-1}$-constacyclic.
Consequently $\mathcal{C}^{\bot}$ is 2-$D$ skew $(\lambda_1^{-1},\lambda_2^{-1})$-constacyclic.\\
The converse holds by the fact that $(\mathcal{C}^{\bot})^{\bot}=\mathcal{C}$.
\end{proof}

The ring $R[x,y;\rho,\theta]$ can be localized to the right at the multiplicative set $S=\{x^iy^j\mid i,j\in\mathbb{N}\}$. The existance of the localization 
$R[x,y;\rho,\theta]S^{-1}$ follows from \cite[Theorem 2]{R} since $S$ verifies the following two necessary and sufficient conditions:
\begin{enumerate}
\item For all $x^{i_1}y^{j_1}\in S$ and $f(x,y)\in R[x,y;\rho,\theta]$, there exists $x^{i_2}y^{j_2}\in S$ and $g(x,y)\in R[x,y;\rho,\theta]$ such that 
$f(x,y)x^{i_1}y^{j_1}= x^{i_2}y^{j_2}\star g(x,y)$. To prove this note to the multiplication rule $x^iy^j\star a=\rho^i\theta^j(a)x^{i}y^{j}$.
\item If for $x^{i_1}y^{j_1}\in S$ and $f(x,y)\in R[x,y;\rho,\theta]$ we have $x^{i_1}y^{j_1}\star f(x,y)=0$, then there exists $x^{i_2}y^{j_2}\in S$  such that 
$f(x,y)x^{i_2}y^{j_2}=0$. But since $x^{i_2}y^{j_2}$ is never a zero divisor, $f(x,y)$ must be zero.
\end{enumerate}
Now, we consider the ring $R[x,y;\rho,\theta]S^{-1}$ consisting of the elements $\sum\limits_{j=0}^{t}\sum\limits_{i=0}^{k}x^{-i}y^{-j}a_{i,j}$, 
where the coefficients are on the right and where the multiplication rule is given by $ax^{-1}y^{-1}=x^{-1}y^{-1}\rho\theta(a)$.

\begin{prop}
Let $\psi:R[x,y;\rho,\theta]\to R[x,y;\rho,\theta]S^{-1}$ be defined by 
$$\psi(\sum\limits_{j=0}^{t}\sum\limits_{i=0}^{k}a_{i,j}x^iy^j)=\sum\limits_{j=0}^{t}\sum\limits_{i=0}^{k}x^{-i}y^{-j}a_{i,j}.$$
Then $\psi$ is a ring anti-isomorphism.
\end{prop}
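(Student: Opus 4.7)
The plan is to verify in turn that $\psi$ is additive, anti-multiplicative, and bijective, treating each as a separate piece of bookkeeping. Additivity is immediate from the definition, since the formula for $\psi$ is $R$-linear in the coefficients $a_{i,j}$ and acts on distinct monomials independently.

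For anti-multiplicativity, by distributivity it is enough to verify $\psi(f\star g)=\psi(g)\star\psi(f)$ on monomials $f=ax^{i}y^{j}$ and $g=bx^{r}y^{s}$. The left-hand side equals $\psi(a\rho^{i}\theta^{j}(b)x^{i+r}y^{j+s})=x^{-(i+r)}y^{-(j+s)}\,a\rho^{i}\theta^{j}(b)$. To compute the right-hand side I first extend the given commutation rule $ax^{-1}y^{-1}=x^{-1}y^{-1}\rho\theta(a)$ by iteration (together with the equality $\rho\theta=\theta\rho$ forced by associativity of the multiplication in $R[x,y;\rho,\theta]$) to the more general rule $cx^{-r}y^{-s}=x^{-r}y^{-s}\rho^{r}\theta^{s}(c)$ for every $c\in R$ and $r,s\in\mathbb{N}_{0}$. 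Applying this to move $b$ past $x^{-i}y^{-j}$ yields $\psi(g)\star\psi(f)=(x^{-r}y^{-s}b)(x^{-i}y^{-j}a)=x^{-(r+i)}y^{-(s+j)}\rho^{i}\theta^{j}(b)a$, and since $R$ is commutative we have $a\rho^{i}\theta^{j}(b)=\rho^{i}\theta^{j}(b)a$, so the two expressions agree.

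For bijectivity, I would point out that $\psi$ is plainly a coefficient-reversing bijection between the natural $R$-spanning families $\{x^{i}y^{j}:i,j\in\mathbb{N}_{0}\}$ and $\{x^{-i}y^{-j}:i,j\in\mathbb{N}_{0}\}$, so the candidate inverse $\sum x^{-i}y^{-j}a_{i,j}\mapsto\sum a_{i,j}x^{i}y^{j}$ is two-sided once one checks, using the commutation rule just established together with the fact (already used in the Ore construction preceding the proposition) that each $x^{i}y^{j}$ is a non-zero-divisor, that the family $\{x^{-i}y^{-j}\}$ is $R$-free in $R[x,y;\rho,\theta]S^{-1}$; that $\psi(1)=1$ is immediate.

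The only genuinely delicate step is the anti-multiplicativity calculation; its conceptual content is that the twist $\rho^{i}\theta^{j}$ naturally attaches itself to $b$ on both sides of the comparison, and it is commutativity of $R$ that then permits exchanging $a$ and $\rho^{i}\theta^{j}(b)$, turning what looks formally like a twisted multiplicative identity into a genuine \emph{anti}-multiplicative one. This is where I would concentrate attention.
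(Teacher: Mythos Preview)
Your argument is correct and is precisely the direct verification that the paper defers to by citing \cite[Theorem~4.4]{BSU}: check additivity, reduce anti-multiplicativity to monomials and use the commutation rule $cx^{-r}y^{-s}=x^{-r}y^{-s}\rho^{r}\theta^{s}(c)$ together with commutativity of $R$, then observe bijectivity via the obvious inverse on the $R$-basis $\{x^{-i}y^{-j}\}$. There is nothing to add; your explicit write-up simply unpacks what the paper leaves as a reference.
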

\begin{proof}
Similar to \cite[Theorem 4.4]{BSU}.
\end{proof}

{\bf From now on, we assume that $ \lambda_1,\lambda_2\in R^{\rho,\theta}$, $\mid\langle\rho\rangle\mid|l$ and $\mid\langle\theta\rangle\mid|s$.}

\begin{thm}\label{main}
Let $a(x,y)=\sum\limits_{j=0}^{s-1}\sum\limits_{i=0}^{l-1}a_{i,j}x^iy^j$ and $b(x,y)=
\sum\limits_{j=0}^{s-1}\sum\limits_{i=0}^{l-1}b_{i,j}x^iy^j$ be in $R[x,y;\rho,\theta]$.
Suppose that $\lambda_1^2=\lambda_2^2=1$. Then the following conditions are equivalent:
\begin{enumerate}
\item The coefficient matrix of $a(x,y)$ is orthogonal to the coefficient matrix of
$x^iy^j\big(x^{l-1}y^{s-1}\star\psi(b(x,y))\big)$ for all $i\in\{0,1,\dots,l-1\}$ and all $j\in\{0,1,\dots,s-1\}$;
\item The coefficient matrix of $a(x,y)$ is orthogonal to 
$$\mathcal{A}=
{\begin{pmatrix}
b_{l-1,s-1} &\theta(b_{l-1,s-2})&\dots&\theta^{s-1}(b_{l-1,0})\\
\rho(b_{l-2,s-1})&\rho\theta(b_{l-2,s-2})&\dots&\rho\theta^{s-1}(b_{l-2,0})\\
\vdots&\vdots&\ddots&\vdots\\
\rho^{l-1}(b_{0,s-1})&\rho^{l-1}\theta(b_{0,s-2})&\dots&\rho^{l-1}\theta^{s-1}(b_{0,0})
\end{pmatrix}}$$
and all of its column skew $\lambda_1$-constacyclic shifts and row skew $\lambda_2$-constacyclic shifts;
\item $a(x,y)\star b(x,y)=0$ in $R^{\circ}$.
\end{enumerate} 
\end{thm}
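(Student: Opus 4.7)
The plan is to prove the three-way equivalence by establishing $(1)\Leftrightarrow(2)$ and $(1)\Leftrightarrow(3)$, with the common bridge being an explicit description of $\mathcal{B}(x,y):=x^{l-1}y^{s-1}\star\psi(b(x,y))$ as an element of $R^{\circ}$. Rewriting $\psi(b(x,y))=\sum_{i,j}\rho^{-i}\theta^{-j}(b_{i,j})\,x^{-i}y^{-j}$ by moving coefficients past $x^{-i}y^{-j}$ via the localization rule $ax^{-1}y^{-1}=x^{-1}y^{-1}\rho\theta(a)$, and then applying $x^{l-1}y^{s-1}\star(\cdot)$ together with $\rho^{l}=\theta^{s}=\mathrm{id}$, one obtains
$$\mathcal{B}(x,y)=\sum_{u=0}^{l-1}\sum_{v=0}^{s-1}\rho^{u}\theta^{v}(b_{l-1-u,\,s-1-v})\,x^{u}y^{v},$$
whose coefficient matrix in $R^{\circ}$ is exactly $\mathcal{A}$. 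Combined with Theorem \ref{T2} and the observation that left multiplication by $x$ (resp.\ $y$) in $R^{\circ}$ performs a column skew $\lambda_{1}$-constacyclic (resp.\ row skew $\lambda_{2}$-constacyclic) shift on coefficient matrices, as $(i,j)$ ranges over $\{0,\ldots,l-1\}\times\{0,\ldots,s-1\}$ the coefficient matrices of $x^{i}y^{j}\star\mathcal{B}(x,y)$ are precisely $\mathcal{A}$ together with all of its column and row constacyclic shifts; this yields $(1)\Leftrightarrow(2)$.

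For $(1)\Leftrightarrow(3)$ I would expand $a(x,y)\star b(x,y)$ and reduce modulo $\langle x^{l}-\lambda_{1},y^{s}-\lambda_{2}\rangle_{\mathit{l}}$ using the centrality of $\lambda_{1},\lambda_{2}$ guaranteed by the standing assumption together with Theorem \ref{T1}. The coefficient of $x^{U}y^{V}$ in the reduction works out to
$$\sum_{u=0}^{l-1}\sum_{v=0}^{s-1}a_{u,v}\,\lambda_{1}^{[u>U]}\lambda_{2}^{[v>V]}\,\rho^{u}\theta^{v}\bigl(b_{(U-u)\bmod l,\,(V-v)\bmod s}\bigr),$$
where $[\,\cdot\,]$ denotes the Iverson bracket. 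An entry-by-entry comparison then shows that this sum equals $\lambda_{1}^{\epsilon_{1}}\lambda_{2}^{\epsilon_{2}}\,(a\odot M_{i,j})$, where $M_{i,j}$ is the coefficient matrix of $x^{i}y^{j}\star\mathcal{B}(x,y)$ for $i\equiv U+1\pmod l$ and $j\equiv V+1\pmod s$, and $\epsilon_{1},\epsilon_{2}\in\{0,1\}$ depend only on whether $U=l-1$ and $V=s-1$. Since $\lambda_{1}^{2}=\lambda_{2}^{2}=1$, each $\lambda_{i}$ is a central self-inverse unit, so this scalar factor is invertible; thus the vanishing of every coefficient of $a(x,y)\star b(x,y)$ in $R^{\circ}$ is equivalent to $a\odot M_{i,j}=0$ for every $(i,j)$, which together with $(1)\Leftrightarrow(2)$ delivers $(1)\Leftrightarrow(3)$.

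The main technical obstacle is the bookkeeping in this last step: position by position, one must match the $\lambda_{1}$ factors introduced by reducing $x^{u+r}$ when $u+r\geq l$ against the $\lambda_{1}$ factors appearing in the top row of a column constacyclic shift of $\mathcal{A}$ (and similarly for $\lambda_{2}$). The hypothesis $\lambda_{1}^{2}=\lambda_{2}^{2}=1$ is exactly what reconciles the two sources of these scalar factors via $\lambda_{i}^{-1}=\lambda_{i}$; without it, (2) would need to be restated in terms of $(\lambda_{1}^{-1},\lambda_{2}^{-1})$-constacyclic shifts. A clean way to carry out the verification is to first treat the base case $(U,V)=(l-1,s-1)$, where no reduction is needed and $a\odot\mathcal{A}$ is immediately the coefficient of $x^{l-1}y^{s-1}$ in $a\star b$, and then propagate to all other $(U,V)$ by applying left multiplication by $x$ or $y$ to both sides of the identity and invoking the column/row shift correspondence from the first paragraph.
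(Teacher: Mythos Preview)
Your proposal is correct and follows essentially the same route as the paper: for $(1)\Leftrightarrow(2)$ you both compute the coefficient matrix of $x^{l-1}y^{s-1}\star\psi(b(x,y))$ to be $\mathcal{A}$ and identify left multiplication by $x^{i}y^{j}$ with the constacyclic shifts, and for the equivalence with $(3)$ you both expand $a(x,y)\star b(x,y)$ in $R^{\circ}$ and recognize each coefficient $c_{k,t}$ as a unit multiple of the $\odot$-product of the coefficient matrix of $a$ with a shift of $\mathcal{A}$. The paper organizes the second step as $(2)\Leftrightarrow(3)$, pulling out a global factor $\lambda_1\lambda_2$ and displaying the resulting matrix explicitly for each $(k,t)$, whereas you package the same computation with Iverson brackets and a base-case-plus-propagation scheme; these are cosmetic differences only.
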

\begin{proof}
(1)$\Leftrightarrow$(2) With a routine computation we can deduce that the coefficient matrix of $x^{l-1}y^{s-1}\star\psi(b(x,y))$ is equal to $\mathcal{A}$,
also the coefficient matrices of  $x^iy^j\big(x^{l-1}y^{s-1}\star\psi(b(x,y))\big)$ for $0\leq i\leq l-1$ and $0\leq j\leq s-1$ are precisely 
column skew $\lambda_1$-constacyclic shifts and row skew $\lambda_2$-constacyclic shifts of $\mathcal{A}$.\\
(2)$\Leftrightarrow$(3)
Suppose that $a(x,y)\star b(x,y)=\sum\limits_{t=0}^{s-1}\sum\limits_{k=0}^{l-1}c_{i,j}x^iy^j\in R^{\circ}$. For $k\in\{0,1,\dots,l-1\}$ and $t\in\{0,1,\dots,s-1\}$
we have that
{\scriptsize
\begin{eqnarray*}
c_{k,t}&=&\sum_{\substack{j+v=t\\0\leq j,v\leq s-1}}\ \sum_{\substack{i+u=k\\0\leq i,u\leq l-1}}a_{i,j}\rho^i\theta^j(b_{uv})+\sum_{\substack{j+v=t\\0\leq j,v\leq s-1}}\ \sum_{\substack{i+u=k+l\\0\leq i,u\leq l-1}}a_{i,j}\rho^i\theta^j(\lambda_1 b_{uv})\\
&+&\sum_{\substack{j+v=t+s\\0\leq j,v\leq s-1}}\ \sum_{\substack{i+u=k\\0\leq i,u\leq l-1}}a_{i,j}\rho^i\theta^j(\lambda_2 b_{uv})+\sum_{\substack{j+v=t+s\\0\leq j,v\leq s-1}}\ \sum_{\substack{i+u=k+l\\0\leq i,u\leq l-1}}a_{i,j}\rho^i\theta^j(\lambda_1\lambda_2 b_{uv})\\
&=&\lambda_1\lambda_2\big(\sum_{\substack{j+v=t\\0\leq j,v\leq s-1}}\ \sum_{\substack{i+u=k\\0\leq i,u\leq l-1}}a_{i,j}\rho^{k-u}\theta^{t-v}(\lambda_1\lambda_2b_{uv})\\
&+&\sum_{\substack{j+v=t\\0\leq j,v\leq s-1}}\ \sum_{\substack{i+u=k+l\\0\leq i,u\leq l-1}}a_{i,j}\rho^{k+l-u}\theta^{t-v}(\lambda_2 b_{uv})\\
&+&\sum_{\substack{j+v=t+s\\0\leq j,v\leq s-1}}\ \sum_{\substack{i+u=k\\0\leq i,u\leq l-1}}a_{i,j}\rho^{k-u}\theta^{t+s-v}(\lambda_1 b_{uv})\\
&+&\sum_{\substack{j+v=t+s\\0\leq j,v\leq s-1}}\ \sum_{\substack{i+u=k+l\\0\leq i,u\leq l-1}}a_{i,j}\rho^{k+l-u}\theta^{t+s-v}(b_{uv})\big)
\end{eqnarray*}}
which is the $\odot$-product of 
$$\scriptsize{\lambda_1\lambda_2\begin{pmatrix}
a_{0,0} & \dots & a_{0,s-1} \\
a_{1,0} & \dots & a_{1,s-1}\\
\vdots&\ddots&\vdots\\
a_{l-1,0} & \dots & a_{l-1,s-1}
\end{pmatrix}}$$
in the matrix

$$
\scriptsize
{\begin{pmatrix}
\lambda_1\lambda_2b_{k,t} &\theta(\lambda_1\lambda_2b_{k,t-1})\dots&\theta^t(\lambda_1\lambda_2b_{k,0})&\theta^{t+1}(\lambda_1b_{k,s-1})\dots&\theta^{s-1}(\lambda_1b_{k,t+1})\\
\rho(\lambda_1\lambda_2b_{k-1,t})&\rho\theta(\lambda_1\lambda_2b_{k-1,t-1})\dots&\rho\theta^t(\lambda_1\lambda_2b_{k-1,0})&\rho\theta^{t+1}(\lambda_1b_{k-1,s-1})\dots&\rho\theta^{s-1}(\lambda_1b_{k-1,t+1})\\
\vdots&\vdots&\vdots&\vdots&\vdots\\
\rho^k(\lambda_1\lambda_2b_{0,t})&\rho^k\theta(\lambda_1\lambda_2b_{0,t-1})\dots&\rho^k\theta^t(\lambda_1\lambda_2b_{0,0})&\rho^k\theta^{t+1}(\lambda_1b_{0,s-1})\dots&\rho^k\theta^{s-1}(\lambda_1b_{0,t+1})\\
\rho^{k+1}(\lambda_2b_{l-1,t})&\rho^{k+1}\theta(\lambda_2b_{l-1,t-1})\dots&\rho^{k+1}\theta^t(\lambda_2b_{l-1,0})&\rho^{k+1}\theta^{t+1}(b_{l-1,s-1})\dots&\rho^{k+1}\theta^{s-1}(b_{l-1,t+1})\\
\vdots&\vdots&\vdots&\vdots&\vdots\\
\rho^{l-1}(\lambda_2b_{k+1,t})&\rho^{l-1}\theta(\lambda_2b_{k+1,t-1})\dots&\rho^{l-1}\theta^t(\lambda_2b_{k+1,0})&\rho^{l-1}\theta^{t+1}(b_{k+1,s-1})\dots&\rho^{l-1}\theta^{s-1}(b_{k+1,t+1})
\end{pmatrix}}.$$\\
Therefore $a(x,y)\star b(x,y)=0$ in $R^{\circ}$ if and only if $c_{k,t}=0$
for all $k\in\{0,1,\dots,l-1\}$ and all $t\in\{0,1,\dots,s-1\}$ which is true if and only if 
$${\begin{pmatrix}
a_{0,0} & \dots & a_{0,s-1} \\
a_{1,0} & \dots & a_{1,s-1}\\
\vdots&\ddots&\vdots\\
a_{l-1,0} & \dots & a_{l-1,s-1}
\end{pmatrix}}$$
is orthogonal to 
$\mathcal{A}$
and all of its column skew $\lambda_1$-constacyclic shifts and row skew $\lambda_2$-constacyclic shifts. 
\end{proof}

\begin{prop}\label{haveopen}
Let $\lambda_1^2=\lambda_2^2=1$, $g(x,y)$ be a right divisor of $(x^l-\lambda_1)\star(y^s-\lambda_2)$ in $R[x,y;\rho,\theta]$ and $h(x,y):=\frac{(x^l-\lambda_1)\star(y^s-\lambda_2)}{g(x,y)}$ with ${\rm deg}(h(x,y))=(k,t)$. Suppose that $\mathcal{C}$ is a 2-D skew $(\lambda_1,\lambda_2)$-constacyclic code of length $ls$ over $R$ that is generated by $g(x,y)$.
Then the bivariate skew polynomial $x^ky^t\star\psi(h(x,y))$ is a right divisor of $(x^l-\lambda_1)\star(y^s-\lambda_2)$ and $x^ky^t\star\psi(h(x,y))\in \mathcal{C}^{\bot}$.
\end{prop}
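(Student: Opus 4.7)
Write $b := x^{k}y^{t}\star\psi(h(x,y))$; since $\deg h=(k,t)$ with $k\le l-1$ and $t\le s-1$, the negative exponents in $\psi(h)$ are absorbed by $x^{k}y^{t}$, so $b$ is a genuine element of $R[x,y;\rho,\theta]$. I would handle the two assertions separately.

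For the divisibility assertion, the plan is to apply the anti-isomorphism $\psi$ to the equality $h\star g=(x^{l}-\lambda_{1})\star(y^{s}-\lambda_{2})$ and then shift the resulting Laurent element back into $R[x,y;\rho,\theta]$ by multiplying on the left by $x^{l}y^{s}$. Expanding $(x^{l}-\lambda_{1})(y^{s}-\lambda_{2})$, using that $\lambda_{1},\lambda_{2}\in R^{\rho,\theta}$ (hence central), the commutativity $xy=yx$, and $\lambda_{1}^{2}=\lambda_{2}^{2}=1$, a direct computation gives
\[
x^{l}y^{s}\,\psi\bigl((x^{l}-\lambda_{1})(y^{s}-\lambda_{2})\bigr)=(1-\lambda_{1}x^{l})(1-\lambda_{2}y^{s})=\lambda_{1}\lambda_{2}(x^{l}-\lambda_{1})(y^{s}-\lambda_{2}).
\]
At the same time, anti-multiplicativity of $\psi$ yields $\psi(h\star g)=\psi(g)\star\psi(h)$; and since $\deg g=(l-k,s-t)$, every monomial of $x^{l}y^{s}\psi(g)$ has $x$-degree at least $k$ and $y$-degree at least $t$, so one can factor $x^{k}y^{t}$ on the right to write $x^{l}y^{s}\,\psi(g)=F\star x^{k}y^{t}$ for an explicit $F\in R[x,y;\rho,\theta]$. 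Combining the two gives $F\star b=\lambda_{1}\lambda_{2}(x^{l}-\lambda_{1})(y^{s}-\lambda_{2})$, proving that $b$ is a right divisor.

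For the dual-code assertion, the plan is to use Theorem~\ref{main} via an auxiliary polynomial $b^{*}$ whose matrix $\mathcal{A}(b^{*})$ coincides with the coefficient matrix of $b$. The defining condition $x^{l-1}y^{s-1}\star\psi(b^{*})=b$ together with $xy=yx$ and a short application of $\psi^{-1}$ pin down
\[
b^{*}=h(x,y)\star x^{l-1-k}y^{s-1-t}.
\]
Now take any codeword $c\in\mathcal{C}$; since $\mathcal{C}=\langle g\rangle_{\mathit{l}}$ in $R^{\circ}$, write $c\equiv p\star g$ modulo $\langle x^{l}-\lambda_{1},y^{s}-\lambda_{2}\rangle_{\mathit{l}}$. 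By Theorem~\ref{T1} the polynomial $h\star g=(x^{l}-\lambda_{1})(y^{s}-\lambda_{2})$ is monic and central, so Proposition~\ref{P4} gives $g\star h=h\star g=(x^{l}-\lambda_{1})(y^{s}-\lambda_{2})$; and Theorem~\ref{T1}(3) makes the ideal two-sided. Therefore
\[
c\star b^{*}\equiv p\star(x^{l}-\lambda_{1})(y^{s}-\lambda_{2})\star x^{l-1-k}y^{s-1-t}\equiv 0\quad\text{in }R^{\circ}.
\]
Theorem~\ref{main} then yields that the coefficient matrix of $c$ is orthogonal to $\mathcal{A}(b^{*})$, which by construction equals the coefficient matrix of $b$. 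Since this holds for every $c\in\mathcal{C}$, we conclude $b\in\mathcal{C}^{\bot}$.

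The principal obstacle is guessing the correct auxiliary polynomial $b^{*}$: the natural candidate $b$ itself is not of the shape $x^{l-1}y^{s-1}\star\psi(\,\cdot\,)$ appearing in Theorem~\ref{main}, and one must perform the $\psi$/$\psi^{-1}$ bookkeeping (together with $xy=yx$) to identify $b^{*}=h\star x^{l-1-k}y^{s-1-t}$. Once this identification is in place, everything collapses to the centrality/commutativity package supplied by Theorem~\ref{T1} and Proposition~\ref{P4}.
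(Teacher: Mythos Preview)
Your argument is correct and follows essentially the same path as the paper: both clear denominators in $\psi(h\star g)$ via left multiplication by $x^{l}y^{s}$ for the divisibility claim (the paper's left factor $\psi(g)\star\lambda_1\lambda_2 x^{l-k}y^{s-t}$ is exactly your $\lambda_1\lambda_2 F$), and both deduce $\mathcal{C}^{\perp}$-membership from Theorem~\ref{main} together with $g\star h=0$ in $R^{\circ}$, the latter obtained via Proposition~\ref{P4} and Theorem~\ref{T1}. The only cosmetic difference is that the paper applies Theorem~\ref{main} directly with second argument $h$ and recognizes the coefficient matrix of $x^{k}y^{t}\star\psi(h)$ among the constacyclic shifts of $\mathcal{A}(h)$, whereas your auxiliary $b^{*}=h\star x^{l-1-k}y^{s-1-t}$ simply absorbs that shift into the second argument so that the desired matrix appears as $\mathcal{A}(b^{*})$ itself.
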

\begin{proof}
By the assumtions that $\lambda_1,\lambda_2\in R^{\rho,\theta}$, $\mid\langle\rho\rangle\mid|l$ and $\mid\langle\theta\rangle\mid|s$ we can see that
{\scriptsize
\begin{eqnarray*} 
\Big(\psi(g(x,y))\star\lambda_1\lambda_2x^{l-k}y^{s-t}\Big)\star\Big(x^ky^t\star\psi(h(x,y))\Big)
&=&\psi(g(x,y))\star\lambda_1\lambda_2x^{l}y^{s}\star\psi(h(x,y)) \\ 
&=& \lambda_1\lambda_2 x^{l}y^{s} \star\psi(g(x,y))\star\psi(h(x,y)) \\ 
&=& \lambda_1\lambda_2 x^{l}y^{s} \star\psi(h(x,y)\star g(x,y))\\
&&\hspace{-0.7cm}(\mbox{since $\psi$ is a ring anti-isomorphism})\\
&=& \lambda_1\lambda_2 x^{l}y^{s} \star\psi\Big((x^l-\lambda_1)\star(y^s-\lambda_2)\Big)\\
&=& \lambda_1\lambda_2 x^{l}y^{s}\star(x^{-l}y^{-s}\\
&-&x^{-l}\lambda_2-y^{-s}\lambda_1+\lambda_1\lambda_2)\\
&=&(x^l-\lambda_1)\star(y^s-\lambda_2).
\end{eqnarray*}}
Moreover, since $g(x,y)\star h(x,y)=0$ in $R^{\circ}$, then Theorem \ref{main} implies that $x^ky^t\star\psi(h(x,y))\in \mathcal{C}^{\bot}$.
\end{proof}

Now we can state the following open problem.
\begin{prob}
In Proposition \ref{haveopen}, is $\mathcal{C}^{\bot}$ generated by $x^ky^t\star\psi(h(x,y))$?
\end{prob}

\begin{thm}
Let $g(x,y)$ be a monic right divisor of $(x^l-\lambda_1)\star(y^s-\lambda_2)$ in $R[x,y;\rho,\theta]$ and $h(x,y):=\frac{(x^l-\lambda_1)\star(y^s-\lambda_2)}{g(x,y)}$. Suppose that $\mathcal{C}$ is a 2-D skew $(\lambda_1,\lambda_2)$-constacyclic code of length $ls$ over $R$ that is generated by $g(x,y)$.
Then for $f(x,y)\in R[x,y;\rho,\theta]$, $f(x,y)\in \mathcal{C}$ if and only if $f(x,y)\star h(x,y)=0$ in $R^{\diamond}=R[x,y;\rho,\theta]/\langle (x^l-\lambda_1)\star(y^s-\lambda_2)\rangle_{\mathit{l}}$.
\end{thm}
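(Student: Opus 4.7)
My plan is to split the biconditional and hinge everything on two structural observations about $h(x,y)$.

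First I would show that $h(x,y)$ is monic of quasi-degree $(k,t)$. Write $\deg(g) = (l-k,s-t)$; then in the identity $g\star h = (x^l-\lambda_1)\star(y^s-\lambda_2)$ the right-hand side is monic of quasi-degree $(l,s)$ (it is in fact central by Theorem~\ref{T1}(3) under the standing hypotheses $\lambda_1,\lambda_2\in R^{\rho,\theta}$, $|\langle\rho\rangle|\mid l$, $|\langle\theta\rangle|\mid s$), so leading-term bookkeeping forces $h$ to be monic of quasi-degree $(k,t)$. Consequently $h$ is not a zero-divisor on either side: right-multiplying (or left-multiplying) any nonzero polynomial $A$ by $h$ produces a polynomial whose leading coefficient equals the leading coefficient of $A$ times $1$, hence is nonzero. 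Finally, Proposition~\ref{P4}, applied to the monic central polynomial $g\star h$, gives $g\star h = h\star g$.

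For the easy direction $(\Leftarrow)$, assume $f(x,y)\star h(x,y)\equiv 0$ in $R^{\diamond}$, so $f\star h = q\star (x^l-\lambda_1)\star(y^s-\lambda_2) = q\star g\star h$ for some $q\in R[x,y;\rho,\theta]$. Rearranging gives $(f - q\star g)\star h = 0$, and the non-zero-divisor property of $h$ forces $f = q\star g$. Thus $f\in\langle g(x,y)\rangle_l$, and its image in $R^{\circ}$ lies in $\mathcal{C}$.

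For the forward direction $(\Rightarrow)$, I would interpret ``$f\in\mathcal{C}$'' via its canonical representative of quasi-degree at most $(l-1,s-1)$ and apply the bivariate Division Algorithm (Theorem~3.4) to produce $f = p\star g + r$ with either $r = 0$ or $\deg(g)\nleq\deg(r)$. Since $\mathcal{C}$ is a left $R[x,y;\rho,\theta]$-module containing $g$ (hence $p\star g$), the remainder $r = f - p\star g$ lies in $\mathcal{C}$; minimality of $\deg(g)$ inside $\mathcal{C}$ (Lemma~3.5) then forces $r = 0$. Thus $f = p\star g$ holds as an identity in $R[x,y;\rho,\theta]$, whence
$$
f\star h \;=\; p\star g\star h \;=\; p\star (x^l-\lambda_1)\star(y^s-\lambda_2) \;\equiv\; 0 \pmod{\langle (x^l-\lambda_1)\star(y^s-\lambda_2)\rangle_{\mathit{l}}}.
$$

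The main obstacle is precisely the forward direction, because one needs a literal polynomial identity $f = p\star g$, not merely a congruence modulo $\langle x^l-\lambda_1,\, y^s-\lambda_2\rangle_l$: a general lift $f = p\star g + r_1\star(x^l-\lambda_1) + r_2\star(y^s-\lambda_2)$ would produce a residue $r_1\star h\star(x^l-\lambda_1) + r_2\star h\star(y^s-\lambda_2)$ after right-multiplication by $h$, which has no reason to lie in $\langle (x^l-\lambda_1)\star(y^s-\lambda_2)\rangle_l$ unless $g$ individually right-divides $x^l-\lambda_1$ or $y^s-\lambda_2$. The Division-Algorithm-plus-minimality argument of Lemma~3.5 is exactly what resolves this: it guarantees that the canonical representative of any member of $\mathcal{C}$ is already a genuine left multiple of $g$ in $R[x,y;\rho,\theta]$.
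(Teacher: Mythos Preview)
Your argument is correct and mirrors the paper's proof: both directions rest on the factorization $h\star g = g\star h = (x^l-\lambda_1)\star(y^s-\lambda_2)$ together with right-cancellation of the monic polynomial $h$. The paper simply asserts $f=q\star g$ in the forward direction, whereas you supply the Division-Algorithm-plus-minimality justification and make the appeal to Proposition~\ref{P4} explicit; these are refinements of, not departures from, the paper's route.
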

\begin{proof}
Let $f(x,y)\in\mathcal{C}$. Then $f(x,y)=q(x,y)\star g(x,y)$ for some $q(x,y)\in R[x,y;\rho,\theta]$. So we have
\begin{eqnarray*} 
f(x,y)\star h(x,y)&=&q(x,y)\star g(x,y)\star h(x,y)\\
&=&q(x,y)\star(x^l-\lambda_1)\star(y^s-\lambda_2)=0,
\end{eqnarray*} 
in $R^{\diamond}$.
Conversely, assume that $f(x,y)\star h(x,y)=0$ in $R^{\diamond}$.
Then $f(x,y)\star h(x,y)=q(x,y)\star(x^l-\lambda_1)\star(y^s-\lambda_2)$ for some $q(x,y)\in R[x,y;\rho,\theta]$.
Therefore $f(x,y)\star h(x,y)=q(x,y)\star g(x,y)\star h(x,y)$. Now, since $h(x,y)$ is monic, $f(x,y)=q(x,y)\star g(x,y)\in \mathcal{C}$. 
\end{proof}


\vskip 0.4 true cm

\begin{center}{\textbf{Acknowledgments}}
\end{center}
The author would like to thank the referee for constructive comments 
which will help to improve the quality of the paper.  \\ \\
\vskip 0.4 true cm

\bibliographystyle{amsplain}

\bigskip
\bigskip

{\footnotesize {\bf Hojjat Mostafanasab}\; \\ {Eski Silahtara\v{g}a Elektrik Santrali, Kazim Karabekir}, 
{Istanbul Bilgi University}, {Cad. No: 2/1334060, Ey\"{u}p Istanbul, Turkey.}\\
{\tt Email: h.mostafanasab@gmail.com}\\

\end{document}